\begin{document}
\pagestyle{fancy}
\fancyhead[L]{QCLMC for random elliptic PDEs}
\fancyhead[R]{C. A. Beschle and A. Barth}

\title{Quasi continuous level Monte Carlo for random elliptic PDEs}
\author[$\dagger$]{Cedric Aaron Beschle}
\author[$\star$]{Andrea Barth}

\affil[$\dagger$]{cedric.beschle@ians.uni-stuttgart.de}
\affil[$\star$]{andrea.barth@ians.uni-stuttgart.de}

\date{}

\maketitle

\abstract{
This paper provides a framework in which multilevel Monte Carlo and continuous level Monte Carlo can be compared. In continuous level Monte Carlo the level of refinement is determined by an exponentially distributed random variable, which therefore heavily influences the computational complexity. We propose in this paper a variant of the algorithm, where the exponentially distributed random variable is generated by a quasi Monte Carlo sequence, resulting in a significant variance reduction. In the examples presented the quasi continuous level Monte Carlo algorithm outperforms multilevel and continuous level Monte Carlo by a clear margin.
   }

\section{Introduction}
\label{BeBa_sec:introduction}
In the last decades multilevel Monte Carlo (MLMC) methods have been applied to a plethora of problems in stochastic modelling and uncertainty quantification (see e.g.~\cite{BeBa_Giles2015_MLMC, BeBa_Barth2011_Elliptic, BeBa_Cliffe2011_MLMCPDE, BeBa_Teckentrup2013_MLMCPDE, BeBa_Haji-Ali2016_MLMCOptimization}). The method relies on a hierarchy of approximations arranged as a telescoping sum, resulting in a variance reduction. In fact, in certain situations the multilevel estimator has asymptotically the same computational complexity as one solve of the deterministic problem on the finest discretization of the hierarchy. Having a preset finest discretization level the multilevel estimator is a biased estimator. The continuous level Monte Carlo (CLMC) estimator circumvents this issue by defining the estimator as  a stochastic process (see~\cite{BeBa_Detommaso2019_CLMC}). In the case of uniform mesh refinement it may be considered as the unbiased Rhee and Glynn estimator, introduced in \cite{BeBa_Rhee2015_unbiasedSDE} in the context of SDEs. Based on their idea, an unbiased multilevel Monte Carlo estimator for random elliptic PDEs is proposed in \cite{BeBa_Li2018_unbiasedMLMCPDE}. The advantage of the CLMC estimator is, that it naturally allows for sample adaptive mesh refinement. The level of refinement is here given by an exponentially distributed random variable, which in turn means that the computational complexity of a CLMC estimator relies heavily on the concrete samples of this random variable in the simulation. To reduce the variance in this sampling, we propose a quasi Monte Carlo variant of the algorithm, introduced together with a convergence proof in \cite{BeBa_Beschle2023_QCLMC}.  

Quasi-random numbers or quasi-random methods gained popularity in the last century and have applications in different kind of fields of numerical simulation. For detailed surveys of quasi-random methods, cf.~\cite{BeBa_Niederreiter1978_QMC} and \cite{BeBa_Niederreiter1992_QMC}.
Quasi-random numbers have also been applied to partial differential equations (PDEs) with random coefficient in several works before. In \cite{BeBa_Kuo2012_QMC}, a quasi Monte Carlo finite element method is applied to an elliptic PDE with random coefficient. This is extended to multilevel quasi Monte Carlo \cite{BeBa_Kuo2015_MLQMC} by the same authors. A quasi Monte Carlo method for an elliptic PDE with random coefficient is also considered in \cite{BeBa_Graham2011_QMC} and an extension of it to log-normal coefficients in \cite{BeBa_Graham2015_QMClognormal}. In these works, the term 'quasi' refers to the sampling of the random field, which is not what we consider here.
In our proposed variant of the method it refers to the maximal level of refinement per sample, which is the reason we term it quasi continuous level Monte Carlo (QCLMC) instead of continuous level quasi Monte Carlo.  A similar idea was mentioned in a Remark in \cite{BeBa_Vihola2018_unbiasedMLMC} in the general framework of unbiased MLMC estimators. 

To be able to compare MLMC and CLMC we restate complexity theorems for both methods (Section~\ref{BeBa_sec:multilevel_and_continuous_level_monte_carlo_method}) and introduce the QCLMC variant in Section~\ref{BeBa_sec:quasi_continuous_level_monte_carlo_method}. In Section~\ref{BeBa_sec:random_pde_model} we state a PDE model with a random discontinuous coefficient and a corresponding a-posteriori error estimation in Section~\ref{BeBa_sec:a_posteriori_error_estimation}. We show the performance of MLMC, CLMC and QCLMC in Section~\ref{BeBa_sec:numerical_experiments} in estimating the expectation of the random PDE. 
We treat two explicit examples for the random coefficient in this paper. They demonstrate the negative effect of the discontinuities on the regularity of the pathwise weak solution and on the pathwise convergence rate for standard numerical algorithms on standard meshes. CLMC should perform better than MLMC for such problems as solution samples have distinct areas where error contributions are high compared to other areas. However, the exponentially distributed maximum refinement renders it worse than MLMC when sampled by pseudo-random numbers. The variance reduction by a quasi-random sequence is essential for optimal computational complexity. 

\section{Multilevel and Continuous Level Monte Carlo method}
\label{BeBa_sec:multilevel_and_continuous_level_monte_carlo_method}
Let $\calQ$ denote a real valued quantity of interest of the solution to an underlying stochastic model. 
In the context of uncertainty quantification we are interested in estimating the mean value $\bbE[\calQ]$ of the quantity of interest up to some desired accuracy. Let $Q_L$ be an approximation of $\calQ$ by a discretization-based numerical scheme to some resolution parameter $L \in \N_0$, e.g., corresponding to the degrees of freedom (DOF) of a mesh. Under the assumption that $\bbE[Q_L] \to \bbE[\calQ]$ $\mathbb{P}\textup{-almost surely}$ for $L \to \infty$, our focus is on computing accurate estimates $\widehat{Q}_L^{est}$ to $\bbE[Q_L]$. 
The standard statistical method, the Monte Carlo method, uses an average of approximation samples at a desired resolution. Given $M \in \N$ independent approximation samples $(Q_L^{(k)})_{k=1}^M$ of $Q_L$ at resolution $L>0$, the mean value is estimated as
\begin{equation}
    \bbE[Q_L] \approx \widehat{Q}_L^{\text{MC}} := \frac{1}{M} \sum_{k=1}^M Q_L^{(k)}.
 \label{BeBa_eq:montecarlo}
\end{equation}
In this work we investigate two extensions of this method for the estimation of the mean value. On the one hand the multilevel Monte Carlo method (MLMC), cf.~\cite{BeBa_Giles2015_MLMC, BeBa_Cliffe2011_MLMCPDE}, and on the other hand the continuous level Monte Carlo method (CLMC) developed in \cite{BeBa_Detommaso2019_CLMC}.
The accuracy of the estimations is quantified by the mean-squared-error
\begin{equation}
    \text{MSE}:= \bbE\big[(\widehat{Q}_L^{est} - \bbE[\calQ])^2 \big] = \bbV[\widehat{Q}_L^{est}] +  \bbE[\widehat{Q}_L^{est} - \calQ]^2.
    \label{BeBa_eq:MSE}
\end{equation}
This expansion is the basis for the proofs of the MLMC, respectively CLMC complexity theorems, that are stated in Sections \ref{BeBa_subsec:MLMC} and \ref{BeBa_subsec:CLMC}, respectively. 
Since the MC estimator \eqref{BeBa_eq:montecarlo} is an unbiased estimator for $\bbE[Q_L]$, i.e., $\bbE[\widehat{Q}_L^\text{MC}] = \bbE[Q_L]$, the MSE \eqref{BeBa_eq:MSE} for MC reduces to
\begin{equation*}
 \bbE\big[(\widehat{Q}_L^\text{MC} - \bbE[\calQ])^2 \big] = \bbV[\widehat{Q}_L^\text{MC}] +  \bbE[Q_L - \calQ]^2,
\end{equation*}
consisting of the estimators variance and the squared bias of the approximation in the quantity of interest. 
\subsection{Multilevel Monte Carlo method}
\label{BeBa_subsec:MLMC}
MLMC extends MC by combining samples from different resolutions $(Q_\ell)_{\ell=0}^L$, referred to as levels, in a telescoping sum. We formulate the MLMC estimator for the difference quantity $\bbE[\calQ - Q_0]$ utilizing the linearity of the mean value 
$    \bbE[Q_L - Q_0] =  \ \sum_{\ell=1}^L \bbE[Q_\ell - Q_{\ell-1}]$,
and estimating each difference separately by MC averages \eqref{BeBa_eq:montecarlo}
\begin{equation*}
    \bbE[\calQ - Q_0] \approx \widehat{Q}_{0,L}^\text{MLMC}  := \sum_{\ell=1}^L \frac{1}{M_\ell} \sum_{k=1}^{M_\ell} Q_\ell^{(k)} - Q_{\ell-1}^{(k)}.
\end{equation*}
The differences $Q_\ell^{(k)} - Q_{\ell-1}^{(k)}$ on consecutive levels stemming from the same sample $k =1,...,M_\ell$ for $\ell=1,...,L$ are positively correlated, leading to a decrease in variance from the coarsest to the finest level. MLMC is an unbiased estimator for $\bbE[Q_L - Q_0]$, i.e., $\bbE[\widehat{Q}_{0,L}^\text{MLMC} - Q_0] = \bbE[Q_L - Q_0],$ so the MSE 
for MLMC reduces to
\begin{equation}
 \text{MSE}_{0,L}^\text{MLMC} = \sum_{\ell=1}^L\frac{1}{M_\ell} \bbV[Q_\ell - Q_{\ell-1}] +  \bbE[Q_L - \calQ]^2.
 \label{BeBa_eq:MSEMLMC}
\end{equation}
The following fundamental theorem, cf. \cite[Theorem~$2.1$]{BeBa_Giles2015_MLMC}, \cite[Theorem~$1$]{BeBa_Cliffe2011_MLMCPDE}, and \cite[Theorem~$2.5$]{BeBa_Teckentrup2013_MLMCPDE} is the convergence result for MLMC. Its proof is based on the MSE decomposition into a variance and a bias term \eqref{BeBa_eq:MSEMLMC}.
\begin{theorem}[MLMC - complexity theorem]
\label{BeBa_thm:MLMC-complexity}
    Assume there exists a factor $s \in \bbR$ with $N_\ell = s \,N_{\ell-1} = s^\ell N_0$ for the $\text{DOF}$ $N_\ell$ at level $\ell$. 
    Further, suppose there exist positive constants $\alpha_M, \beta_M, \gamma_M, c_1, c_2, c_3$ with $\min\{\beta_M, 2\alpha_M\}>\gamma_M$ such that for any $\ell \in \N$:
    \begin{subequations}
        \begin{equation}
                \bbE [Q_\ell - Q_{\ell-1}] \leq c_1 s^{-\alpha_M \ell}, 
            \label{BeBa_eq:MLMC_meandecay}
        \end{equation}   
        \begin{equation}
            \bbV [Q_\ell - Q_{\ell-1}] \leq  c_2 s^{-\beta_M \ell}, 
            \label{BeBa_eq:MLMC_variancedecay}
        \end{equation}   
        \begin{equation}
            \calC[Q_\ell - Q_{\ell-1}]\leq  c_3s^{\gamma_M \ell}.
            \label{BeBa_eq:MLMC_costincrease}
        \end{equation}   
    \end{subequations}
    Then, for any $\varepsilon \in (0,1)$, there exist $L \in \N_0$ and a sequence $(M_\ell)_{\ell=1}^{L}$ such that 
    \begin{equation*}
        \text{MSE}_{0,L}^\text{MLMC} \leq \varepsilon^2 \quad\hbox{ and }\quad \calC[\widehat{Q}_{0,L}^\text{MLMC}] \leq C \varepsilon^{-2},
    \end{equation*}
    where $C>0$ is a constant independent of $\varepsilon$.
\end{theorem}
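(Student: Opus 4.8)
The plan is to exploit the MSE decomposition \eqref{BeBa_eq:MSEMLMC} and to split the target $\varepsilon^2$ equally between the squared bias and the statistical (variance) contribution, choosing $L$ to meet the first half and the sample sizes $(M_\ell)$ to meet the second while keeping the cost under control. For brevity write $V_\ell := \bbV[Q_\ell - Q_{\ell-1}]$ and $C_\ell := \calC[Q_\ell - Q_{\ell-1}]$.

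First I would fix the number of levels $L$ so as to annihilate the bias. Since $\bbE[\calQ - Q_L] = \sum_{\ell > L}\bbE[Q_\ell - Q_{\ell-1}]$, the mean-decay bound \eqref{BeBa_eq:MLMC_meandecay} together with $s>1$ lets me sum the geometric tail to obtain $\abs{\bbE[\calQ - Q_L]} \leq c_1 s^{-\alpha_M L}/(s^{\alpha_M}-1)$. Requiring the squared bias to be at most $\varepsilon^2/2$ then forces a choice of the form $L = \ceil{\log(c_1\sqrt{2}/((s^{\alpha_M}-1)\varepsilon))/(\alpha_M \log s)}$, so that $L = O(\log(1/\varepsilon))$; I would keep the explicit constant because it re-enters the cost estimate through the factor $s^{\gamma_M L}$.

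Next I would select the sample sizes by minimising the total cost $\sum_\ell M_\ell C_\ell$ subject to the statistical-error constraint $\sum_\ell V_\ell/M_\ell \leq \varepsilon^2/2$. Relaxing the $M_\ell$ to be real and applying a Lagrange-multiplier argument yields the classical rule $M_\ell \propto \sqrt{V_\ell/C_\ell}$, and rounding up gives
\[
    M_\ell = \ceil{\frac{2}{\varepsilon^2}\sqrt{\frac{V_\ell}{C_\ell}}\sum_{k=1}^{L}\sqrt{V_k C_k}}.
\]
Because this $M_\ell$ dominates the unrounded optimum, the statistical term is at most $\varepsilon^2/2$, and combined with the bias bound the total MSE is at most $\varepsilon^2$. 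For the cost, dropping the ceiling produces the main term $\tfrac{2}{\varepsilon^2}\big(\sum_\ell \sqrt{V_\ell C_\ell}\big)^2$; here \eqref{BeBa_eq:MLMC_variancedecay} and \eqref{BeBa_eq:MLMC_costincrease} give $\sqrt{V_\ell C_\ell}\leq \sqrt{c_2 c_3}\,s^{(\gamma_M-\beta_M)\ell/2}$, and the hypothesis $\beta_M > \gamma_M$ makes the exponent negative, so the sum is a convergent geometric series bounded independently of $L$, hence of $\varepsilon$, contributing $O(\varepsilon^{-2})$.

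The step I expect to be the main obstacle is the bookkeeping around the ceiling function. The rounding adds at most $\sum_\ell C_\ell = O(s^{\gamma_M L}) = O(\varepsilon^{-\gamma_M/\alpha_M})$ to the cost, and it is precisely the second half of the hypothesis, $2\alpha_M > \gamma_M$, that guarantees this correction is dominated by $\varepsilon^{-2}$; it is easy to lose this term or to misattribute which half of $\min\{\beta_M, 2\alpha_M\}>\gamma_M$ governs it. Once both pieces are combined, one obtains $\calC[\widehat{Q}_{0,L}^\text{MLMC}] \leq C\varepsilon^{-2}$ with $C$ independent of $\varepsilon$, and the remainder of the argument is routine geometric-series estimation.
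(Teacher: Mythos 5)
Your proposal is correct and follows essentially the same route as the paper's proof: you split the MSE \eqref{BeBa_eq:MSEMLMC} into squared bias and variance (using an equal $\varepsilon^2/2$ split where the paper carries a general weight $b_w\in(0,1)$), fix $L$ by summing the geometric tail of \eqref{BeBa_eq:MLMC_meandecay}, obtain the same Lagrangian optimum $M_\ell \propto \sqrt{V_\ell/C_\ell}$ (your cost-minimisation under a variance constraint is the dual of the paper's variance-minimisation under a cost constraint), and correctly attribute the ceiling correction $\sum_\ell \calC[Q_\ell - Q_{\ell-1}] = O(s^{\gamma_M L}) = O(\varepsilon^{-\gamma_M/\alpha_M})$ to the hypothesis $2\alpha_M > \gamma_M$ and the main $O(\varepsilon^{-2})$ term to $\beta_M > \gamma_M$. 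No gaps.
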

\begin{proof}
 The proof is based on the MSE expansion \eqref{BeBa_eq:MSEMLMC} with the aim of bounding the MSE by $\varepsilon^2$. This is achieved by splitting the error contribution equally into the variance and the bias term.
In order to obtain a MLMC algorithm with optimizable cost, we introduce a weighting factor $b_w \in (0,1)$, similar to \cite{BeBa_Haji-Ali2016_MLMCOptimization} to obtain weighted error contributions
\begin{equation}
 \text{MSE}_{0,L}^\text{MLMC} = \sum_{\ell=1}^L\frac{1}{M_\ell} \bbV[Q_\ell - Q_{\ell-1}] +  \bbE[Q_L - \calQ]^2 \overset{!}{\leq} (1-b_w) \varepsilon^2 + b_w \, \varepsilon^2 = \varepsilon^2.
 \label{BeBa_eq:bias_weight_MLMC}
\end{equation}
 To bound the bias we use \eqref{BeBa_eq:MLMC_meandecay} and the geometric series for $s^{-\alpha_M} < 1$:
\begin{equation*}
 \begin{aligned}
   \ |\bbE[\calQ - Q_L]| &\leq \sum_{\ell=L+1}^\infty |\bbE[Q_\ell - Q_{\ell-1}]| = \sum_{\ell=L+1}^\infty c_1  s^{-\alpha_M \ell} \\
  = & \ c_1  s^{-\alpha_M L} s^{-\alpha_M} \sum_{\ell=0}^\infty s^{-\alpha_M \ell} = c_1  s^{\alpha_M L} \frac{s^{-\alpha_M}}{1- s^{-\alpha_M}} = \frac{c_1 s^{\alpha_M L}}{s^{\alpha_M}-1}.
 \end{aligned}
\end{equation*}
To obtain a bias smaller than $\sqrt{b_w} \,\varepsilon$ we choose
\begin{equation}
    \frac{c_1 \, s^{\alpha_M L}}{s^{\alpha_M}-1} \leq \sqrt{b_w} \, \varepsilon \Leftrightarrow L = \bigg\lceil \frac{1}{\alpha_M} \log_s \bigg(\frac{c_1 }{\sqrt{b_w}\, \varepsilon (s^{\alpha_M}-1)} \bigg) \bigg\rceil. 
    \label{BeBa_eq:bias_bound}
\end{equation}
It remains to bound the variance. Fixing the computational cost to $\calC_{fix}>0$, we minimize the variance by treating the sample number $M_\ell$ as a continuous variable. With the total cost of the estimator given by 
 $\calC[\widehat{Q}_L^\text{MLMC}] = \sum_{\ell=1}^L M_\ell \,\calC[Q_\ell - Q_{\ell-1}]$,
and the total variance given by
 $\bbV[Q_L^\text{MLMC}] = \sum_{\ell=1}^L\frac{1}{M_\ell} \bbV[Q_\ell - Q_{\ell-1}]$,
we set up the Lagrangian for the minimization:
\begin{equation*}
 \calL(M_0,...,M_L, \lambda) := \sum_{\ell=1}^L\frac{1}{M_\ell} \bbV[Q_\ell - Q_{\ell-1}] + \lambda \bigg(\sum_{\ell=1}^L M_\ell \, \calC[Q_\ell - Q_{\ell-1}] - \calC_{fix} \bigg),
\end{equation*}
where $\lambda > 0$ denotes the Lagrange multiplier.
Differentiation with respect to $M_\ell$ leads to 
\begin{equation*}
 \frac{d\calL}{d M_\ell} = - \frac{\bbV[Q_\ell - Q_{\ell-1}]}{M_\ell^2} + \lambda\, \calC[Q_\ell - Q_{\ell-1}] \overset{!}{=} 0.
\end{equation*}
Rearranging and using \eqref{BeBa_eq:MLMC_variancedecay} and \eqref{BeBa_eq:MLMC_costincrease} considered as proportionalities yields the sample numbers
\begin{equation}
    M_\ell = \sqrt{\frac{1}{\lambda} \, \frac{\bbV[Q_\ell - Q_{\ell-1}]}{\calC[Q_\ell - Q_{\ell-1}]}} = \widetilde{\lambda} \sqrt{\frac{s^{-\beta_M\ell}}{s^{\gamma_M\ell}}} = \widetilde{\lambda}  s^{-\frac{\beta_M + \gamma_M}{2}\ell},
    \label{BeBa_eq:samplenumber-proportionality}
\end{equation}
where the introduced constant of proportionality $\widetilde{\lambda}$ is determined such that the overall variance is smaller than $(1-b_w)\varepsilon^2$. 
Using Equation \eqref{BeBa_eq:samplenumber-proportionality} and the bound \eqref{BeBa_eq:MLMC_variancedecay} we compute for the total variance
\begin{equation*}
 \begin{aligned}
     \ \bbV[\widehat{Q}_{0,L}^\text{MLMC}] &= \sum_{\ell=1}^{L} \frac{1}{M_\ell} \bbV[Q_\ell - Q_{\ell-1}] \leq \frac{1}{\widetilde{\lambda}} c_2 \sum_{\ell=1}^{L}  s^{\frac{(\beta_M + \gamma_M)}{2}\ell} s^{-\beta_M \ell} \\
    = & \ \frac{1}{\widetilde{\lambda}} c_2 \sum_{\ell=1}^{L} s^{\frac{\gamma_M - \beta_M}{2}\ell} = \frac{1}{\widetilde{\lambda}} c_2 \, s^{\frac{\gamma_M - \beta_M}{2}} \frac{1 - s^{\frac{\gamma_M - \beta_M}{2}L}}{1 - s^{\frac{\gamma_M - \beta_M}{2}}},
 \end{aligned}  
\end{equation*}
for $\beta_M > \gamma_M$ and thus $s^{\frac{\gamma_M - \beta_M}{2}} < 1$.
This in turn yields
\begin{equation*}
 \widetilde{\lambda} \geq \frac{c_2}{(1-b_w)\varepsilon^2} s^{\frac{\gamma_M - \beta_M}{2}} \frac{1 - s^{\frac{\gamma_M - \beta_M}{2}L}}{1 - s^{\frac{\gamma_M - \beta_M}{2}}},
\end{equation*}
and we finally obtain
\begin{equation}
     M_\ell = \Bigg\lceil \frac{c_2}{(1-b_w) \varepsilon^2} s^{\frac{\gamma_M - \beta_M}{2}} \frac{1 - s^{\frac{\gamma_M - \beta_M}{2}L}}{1 - s^{\frac{\gamma_M - \beta_M}{2}}} s^{-\frac{\beta_M + \gamma_M}{2}\ell} \Bigg\rceil.
     \label{BeBa_eq:sample_sizes_MLMC}
\end{equation}
Bounding the Gauss bracket in $M_\ell$ by adding $1$, the cost of the overall estimator accumulates to
\begin{equation}
\begin{aligned}
    & \ \calC[\widehat{Q}_{0,L}^\text{MLMC}] = \sum_{\ell=1}^L M_\ell \,\calC[Q_\ell - Q_{\ell-1}] \leq \sum_{\ell=1}^L (1 + M_\ell) \,\calC[Q_\ell - Q_{\ell-1}]\\
    & \ \leq c_3 \sum_{\ell=1}^L s^{\gamma_M \ell}  + \frac{c_2 c_3}{(1-b_w) \varepsilon^2} s^{\frac{\gamma_M - \beta_M}{2}} \frac{1 - s^{\frac{\gamma_M - \beta_M}{2}L}}{1 - s^{\frac{\gamma_M - \beta_M}{2}}} \sum_{\ell=1}^L s^{-\frac{\beta_M + \gamma_M}{2}\ell} s^{\gamma_M \ell}\\
    & \ \leq c_3 s^{\gamma_M} \frac{s^{\gamma_M L} - 1}{s^{\gamma_M} - 1} + 
        \frac{\,c_2\, c_3 }{(1-b_w) \varepsilon^2}  s^{\gamma_M - \beta_M} \bigg(\frac{1 - s^{\frac{\gamma_M - \beta_M}{2}L}}{1 - s^{\frac{\gamma_M - \beta_M}{2}}}\bigg)^2 \\
    & \ \leq c_3  \frac{s^{\gamma_M}}{s^{\gamma_M} - 1} s^{\gamma_M L} + \varepsilon^{-2}
    \frac{\,c_2\, c_3 }{(1-b_w)}  s^{\gamma_M - \beta_M} \bigg(\frac{1}{1 - s^{\frac{\gamma_M - \beta_M}{2}}}\bigg)^2,
\end{aligned}
\label{BeBa_eq:MLMC_cost_bound}
\end{equation}
where we used the fact that $c_3 \frac{s^{\gamma_M}}{s^{\gamma_M} - 1} >0$ and $0 < 1- s^{\frac{\gamma_M - \beta_M}{2}L} < 1$ for all $L>0, s>1$ for $\beta_M > \gamma_M$. We further compute
\begin{equation*}
 \begin{aligned}
    s^{\gamma_M L} \leq s^{\gamma_M \big(\frac{1}{\alpha_M} \log_s \big(\frac{c_1 }{\sqrt{b_w}\, \varepsilon (s^{\alpha_M}-1)} \big) + 1\big)}& \ =  \varepsilon^{-\frac{\gamma_M}{\alpha_M}} \bigg(\frac{c_1 }{\sqrt{b_w} (s^{\alpha_M}-1)} \bigg)^\frac{\gamma_M}{\alpha_M} s^{\gamma_M} \\
    & \ \leq  \varepsilon^{-2} \bigg(\frac{c_1 }{\sqrt{b_w} (s^{\alpha_M}-1)} \bigg)^\frac{\gamma_M}{\alpha_M} s^{\gamma_M},
 \end{aligned}
\end{equation*}
for $2 \alpha_M > \gamma_M$.
With a constant $C:=C(c_1,c_2,c_3,\alpha_M,\beta_M,\gamma_M,s,b_w)>0$ we conclude
\begin{equation*}
  \begin{aligned}
    \calC[\widehat{Q}_{0,L}^\text{MLMC}] \leq & \ \varepsilon^{-2}  \Bigg(\bigg(\frac{c_1 }{\sqrt{b_w} (s^{\alpha_M}-1)} \bigg)^\frac{\gamma_M}{\alpha_M} \frac{s^{2 \gamma_M}}{s^{\gamma_M} - 1} + \frac{\,c_2\, c_3 }{(1-b_w)}   \bigg(\frac{s^{\frac{\gamma_M - \beta_M}{2}}}{1 - s^{\frac{\gamma_M - \beta_M}{2}}}\bigg)^2 \Bigg) \\
    = & \ C \, \varepsilon^{-2}.
    \end{aligned}
\end{equation*}
\,
\end{proof}
\begin{remark}
    The complexity theorem \ref{BeBa_thm:MLMC-complexity} is formulated for the case, where the expected cost to compute the correction samples $Q_\ell - Q_{\ell-1}$, grows slower than the variance of the corrections decreases $\beta_M > \gamma_M$ and at most twice as slow as the bias decreases $2 \alpha_M > \gamma_M$.
    In this case the asymptotic cost of MLMC is of the same order as the cost to compute a single solution sample of the random PDE. Thus, this is the best case for MLMC in terms of total cost to MSE ratio. In $d=1$ and with an optimized PDE solver in $d=2$, a value of $\gamma_M=1$ can be achieved. However, a standard linear solver in $d=2$ already admits $\gamma_M=1.5$ and in $d=3$ the cost grows even faster and one obtains a cost growth of $\varepsilon^{-\kappa}$ with $\kappa>2$ to achieve a MSE of $\varepsilon^2$ in MLMC.
\end{remark}
\subsection{Optimal MSE weight}
\label{BeBa_subsec:optimal_mse_weight}
The choice of the optimal weighting factor $b_w>0$ in the MSE expansion of MLMC \eqref{BeBa_eq:bias_weight_MLMC} for a practical application is very problem dependent. We refer to \cite[Chapter~$2$]{BeBa_Giles2015_MLMC} for a remark on the case $\beta_M > \gamma_M$, where the major computational cost lies on the coarsest level and the maximal level can be increased at comparably small cost. This translates into choosing $b_w>0$ close to $0$, but in a practical setting one is limited by the computational power of the device allowing only to compute up to some maximal level $0 < \bar{L} < \infty$. Therefore, we derive a numerically computable cost formula that depends on the estimated parameters from Section \ref{BeBa_subsec:MLMC_parameter_estimates} to find the optimal weighting in the MSE expansion of MLMC \eqref{BeBa_eq:bias_weight_MLMC} in terms of computational cost and a bound on the maximally computable level.
We start with a variation of the cost formula
\begin{equation}
 \calC[\widehat{Q}_{0,L}^\text{MLMC}] \leq \sum_{\ell=1}^L \max\{2, 1+M_\ell\} \,\calC[Q_\ell - Q_{\ell-1}],
\end{equation}
where the Gauss bracket in the sample size formula \eqref{BeBa_eq:sample_sizes_MLMC} is included by adding $1$. Further, we consider a minimum of $2$ samples on each level to include the minimal amount of samples needed to compute a variance estimate in the MLMC algorithm, not considering the quality of such an estimate here. 
We continue as in the proof of the cost bound \eqref{BeBa_eq:MLMC_cost_bound} in the MLMC complexity theorem, but only until the third inequality, to arrive at
\begin{equation}
\begin{aligned}
  & \ \calC[\widehat{Q}_{0,L}^\text{MLMC}] \leq  \max\bigg\{2 c_3 s^{\gamma_M} \frac{s^{\gamma_M L} - 1}{s^{\gamma_M} - 1}, \frac{\,c_2\, c_3 }{(1-b_w) \varepsilon^2}  s^{\gamma_M - \beta_M} \bigg(\frac{1 - s^{\frac{\gamma_M - \beta_M}{2}L}}{1 - s^{\frac{\gamma_M - \beta_M}{2}}}\bigg)^2\bigg\} \\ & \ + c_3 s^{\gamma_M} \frac{s^{\gamma_M L} - 1}{s^{\gamma_M} - 1} =: \widehat{\calC}^\text{MLMC}(\varepsilon,L,c_2,c_3,\beta_M,\gamma_M,b_w).
 \end{aligned}
 \label{BeBa_eq:MLMC_cost_computable}
\end{equation}
Treating the level $L$ from Equation \eqref{BeBa_eq:bias_bound} as a continuous variable and inserting it into Equation \eqref{BeBa_eq:MLMC_cost_computable} we obtain a cost formula that depends on the given tolerance $\varepsilon$, the estimated parameters $c_1$, $c_2$, $c_3$, $\alpha_M$, $\beta_M$, $\gamma_M$ from Section \ref{BeBa_subsec:MLMC_parameter_estimates} and the value $s$.
The total cost of the MLMC algorithm is minimized by determining
\begin{equation}
 \hat{b}_w = \argmin_{b_w \in (b_{w,low},\,1)} \widehat{\calC}^\text{MLMC}(\varepsilon,c_1,c_2,c_3,\alpha_M,\beta_M,\gamma_M,s,b_w),
 \label{BeBa_eq:MLMC_optimal_bias}
\end{equation}
 where the lower bound to the weighting factor $b_{w,low}$ is obtained by first minimizing
\begin{equation*}
  b_{w,low} = \argmin_{b_w \in (0,1)}|\bar{L} - L| = \argmin_{b_w \in (0,1)}\bigg|\bar{L} - \frac{1}{\alpha_M} \log_s \bigg(\frac{c_1 }{\sqrt{b_w}\, \varepsilon (s^{\alpha_M}-1)} \bigg) \bigg|.
\end{equation*}
The resulting $\hat{b}_w$ from Equation \eqref{BeBa_eq:MLMC_optimal_bias} minimizes the computational cost in MLMC necessary to achieve a desired tolerance, under consideration of the computational resources available in a practical simulation.
\subsection{MLMC on the fly algorithm}
\label{BeBa_subsec:mlmc_algorithm}
We formulate the MLMC algorithm \ref{BeBa_alg:MLMC} that we used in our numerical experiments in Section \ref{BeBa_subsec:method_comparison}. It is a modification of the algorithm of \cite[Section~$5$]{BeBa_Giles2015_MLMC}, that balances the contributions to the MSE \eqref{BeBa_eq:MSEMLMC} from the variance and bias on the fly while keeping the computational cost minimal. After determining the optimal weighting factor $\hat{b}_w$ for the MSE contributions by Equation \eqref{BeBa_eq:MLMC_optimal_bias} as described in Section \ref{BeBa_subsec:optimal_mse_weight}, the algorithm starts with a variance estimate $\bbV_L \approx \bbV[Q_L - Q_{L-1}]$ by an initial number of samples $M_{L,ini} \in \N$ on the first three levels $L=0,1,2$. These estimates are used to determine the optimal number of samples on each level, such that the overall variance is smaller than $(1-\hat{b}_w) \varepsilon^2$. The formula for this is
\begin{equation}
    M_{\ell} = \min\bigg\{2, \frac{1}{(1-\hat{b}_w)\varepsilon^2} \sqrt{\frac{\bbV_\ell}{N_\ell + N_{\ell - 1}}} \sum_{m=1}^L \sqrt{\frac{\bbV_m}{N_m + N_{m-1}}}\bigg\}.
    \label{BeBa_eq:opt-samples-MLMC}
\end{equation}
This is the formula from the Lagrangian minimization for the variance with fixed computational cost measured in degrees of freedom from the proof of the complexity theorem \ref{BeBa_thm:MLMC-complexity}. Additional levels are added until the bias of the estimator on the current finest level is smaller than $\sqrt{\hat{b}_w} \, \varepsilon$. An accurate estimation of the bias is non-trivial, but is motivated as follows:
\begin{equation*}
 \begin{aligned}
    \bbE[Q_\ell - Q_{\ell-1}] & \  = \bbE[Q_\ell - \calQ] - \bbE[Q_{\ell-1} - \calQ] \approx c_1 N_0^{-\alpha_M} \big(s^{-\alpha \ell} - s^{-\alpha_M(\ell-1)} \big) \\
    & \ = c_1 N_0^{-\alpha_M} s^{-\alpha_M \ell} \big(1 - s^{\alpha_M}\big) \approx \bbE[Q_\ell - \calQ](1-s^{\alpha_M}).
 \end{aligned}
\end{equation*}
Rearranging yields
    $\bbE[Q_\ell - \calQ] \approx \bbE[Q_\ell - Q_{\ell-1}](1-s^{\alpha_M})^{-1}$.
To bound the squared bias by $\hat{b}_w \varepsilon^2$ we consider 
\begin{equation*}
    |\bbE[Q_\ell - \calQ]| \approx \big |\bbE[Q_\ell - Q_{\ell-1}]\big|\,|1-s^{\alpha_M}|^{-1} \leq \sqrt{\hat{b}_w} \, \varepsilon,
\end{equation*}
which yields the numerically computable condition
\begin{equation}
 |B^L| := \bigg|\frac{1}{M_L}\sum_{k=1}^{M_L} Q_L^{(k)} - Q_{L-1}^{(k)}\bigg|\,\leq \sqrt{\hat{b}_w} \,\varepsilon \,|1-s^{\alpha_M}|.
 \label{BeBa_eq:MLMC_bias_condition}
\end{equation}
For more robustness in this bias condition, it is recommended in \cite{BeBa_Giles2015_MLMC}, to expand it to the last three computed terms, corrected with the expected bias decrease rate $\alpha_M$:
\begin{equation}
    \max \{s^{-2 \alpha_M} |B^{L-2}|; s^{- \alpha_M} |B^{L-1}| ; |B^L| \} \leq \sqrt{\hat{b}_w} \,\varepsilon \,|1-s^{\alpha_M}|.
    \label{BeBa_eq:bias-estimate-MLMC}
\end{equation}
Condition $\eqref{BeBa_eq:bias-estimate-MLMC}$ is used as a stopping criterion in the MLMC algorithm \ref{BeBa_alg:MLMC}, which determines the maximal level of the simulation on the fly.
\begin{algorithm}
\caption{MLMC}
\label{BeBa_alg:MLMC}
\begin{algorithmic}
\REQUIRE $\varepsilon > 0$, $M_{\ell,ini} \in \N$ for $\ell = 1,2,...$ initial number of samples on each level, $L_{max} \in \N$, $s>1$, $\alpha_M>0$, $\hat{b}_w \in (0,1)$.
\FOR{$L = 1:3$}
    \STATE $M_{L,opt}\gets M_{L,ini}$
    \STATE Evaluate $M_{L,opt}$ samples of $Q_L$ and $Q_{L-1}$
    \STATE Compute $B^L = \frac{1}{M_{L,opt}}\sum_{k=1}^{M_{L,opt}} Q_L^{(k)} - Q_{L-1}^{(k)}$
\ENDFOR
\\
 \WHILE{$\max \{s^{-2 \alpha_M} |B^{L-2}|; s^{- \alpha_M} |B^{L-1}| ; |B^L| \} > \sqrt{\hat{b}_w} \,\varepsilon \,|1-s^{\alpha_M}|$}
 \STATE $L \gets L+1$
 \STATE $M_{L,opt}\gets M_{L,ini}$
 \STATE Evaluate $M_{L,opt}$ samples of $Q_L$ and $Q_{L-1}$
    \FOR{$\ell = 1:L$}
        \STATE Estimate $\bbV_\ell \approx \bbV[Q_\ell - Q_{\ell-1}]$ with $M_{\ell,opt}$ samples
    \ENDFOR
    \FOR{$\ell = 1:L$}
    \STATE Determine $M_{\ell,tmp}$ by \eqref{BeBa_eq:opt-samples-MLMC}
        \IF{$M_{\ell,opt} < M_{\ell,tmp}$}
            \STATE Evaluate $M_{\ell,tmp} - M_{\ell,opt}$ more samples of $Q_\ell$ and $Q_{\ell-1}$
        \ENDIF
        \STATE $M_{\ell,opt} \gets M_{\ell,tmp}$
    \ENDFOR
    \STATE Compute $B^L = \frac{1}{M_{L,opt}}\sum_{k=1}^{M_{L,opt}} Q_L^{(k)} - Q_{L-1}^{(k)}$
 \ENDWHILE
 \STATE $\widehat{Q}_{0,L}^\text{MLMC} = \sum_{\ell=1}^L \frac{1}{M_{\ell,opt}} \sum_{k=1}^{M_{\ell,opt}} Q_\ell^{(k)} - Q_{\ell-1}^{(k)}$.
 \end{algorithmic}
\end{algorithm}
\subsection{Continuous Level Monte Carlo method}
\label{BeBa_subsec:CLMC}
The continuous level Monte Carlo (CLMC) method was introduced in \cite{BeBa_Detommaso2019_CLMC}. Just like MLMC, it is a method to estimate the mean value of a quantity of interest, but it allows for samplewise adaptive mesh hierarchies. This is realized by extending the integer level framework of MLMC to a continuous resolution (level) $\ell \in \R>0$  and a continuous family of approximations $(Q(\ell))_{\ell \geq 0}$ viewed as a stochastic process.
We formulate the method in its unbiased version, cf.~\cite[Corollary~$2.2$]{BeBa_Detommaso2019_CLMC}, since this fits the setting of our provided examples.
The unbiased CLMC estimator is defined by
\begin{equation}
    \widehat{Q}_{0,\infty}^\text{CLMC} := \frac{1}{M} \sum_{k=1}^M \int_0^{\infty} \frac{1}{\bbP(L_{r} \geq \ell)} \bigg(\frac{\textup{d}Q}{\textup{d}\ell}\bigg)^{(k)} (\ell) \ind_{[0,L_{r}^{(k)}]}(\ell) \textup{d}\ell,
\label{BeBa_eq:CLMC}
\end{equation}
with $M \in \N$ total samples and $L_{r} \sim \text{Exp}(r)$ with parameter $r > 0$, an exponentially distributed random variable independent of the stochastic process $(Q(\ell))_{\ell \geq 0}$. A realization $L_{r}^{(k)}$ corresponds to the maximal computed resolution of a sample $k$ for $1 \leq k \leq M$.
The CLMC method is an unbiased estimator for $\bbE[\calQ - Q_0]$, i.e., $\bbE[\widehat{Q}_{0,\infty}^\text{CLMC}] = \bbE[\calQ - Q_0]$ and thus, its MSE expansion \eqref{BeBa_eq:MSE} reduces to
 $\text{MSE}_{0,\infty}^\text{CLMC} := \bbV[\widehat{Q}_{0,\infty}^\text{CLMC}]$.
As for MLMC, a complexity theorem for CLMC has been proven in \cite[Theorem~$2.3$]{BeBa_Detommaso2019_CLMC}.
\begin{theorem}[CLMC - complexity theorem]
\label{BeBa_thm:CLMC-complexity}
Denote by $(Q(\ell))_{\ell \geq 0}$ a stochastic process defined on a probability space $(\widetilde{\Omega}, \widetilde{\calA}, \widetilde{\bbP})$ with $\bbE\big[\big|\frac{\textup{d}Q}{\textup{d}\ell}\big|\big] \in L^1(0,\infty)$, corresponding to a family of numerical approximations of $\calQ$ such that $Q(\ell) \to \calQ$ $\widetilde{\bbP}\textup{-almost surely}$ as $\ell \to \infty$. Suppose there exist positive constants $\alpha_C$, $\beta_C$, $\gamma_C$, $c_4$, $c_5$, $c_6$, with $\min\{\beta_C,2 \alpha_C\} > \gamma_C$ s.t. for any $\ell > 0$:
    \begin{subequations}
        \begin{equation}
                \bbE \bigg[\frac{\textup{d}Q}{\textup{d}\ell}\bigg] \leq c_4 e^{-\alpha_C \ell},
            \label{BeBa_eq:CLMC_meandecay}
        \end{equation}   
        \begin{equation}
            \bbV \bigg[\frac{\textup{d}Q}{\textup{d}\ell}\bigg] \leq c_5 e^{-\beta_C \ell}, 
            \label{BeBa_eq:CLMC_variancedecay}
        \end{equation}   
        \begin{equation}
            \calC[\ell] \leq c_6 e^{\gamma_C \ell},
            \label{BeBa_eq:CLMC_costincrease}
        \end{equation}   
    \end{subequations}
    where $\calC[\ell]$ is the cost to compute a sample of $Q(\ell)$ at resolution $\ell$.
    Let $\gamma_C < r < \min\{\beta_C,2 \alpha_C\}$, then for any $\varepsilon>0$, there exist $M \in \N$ and $C > 0$ such that 
    \begin{equation*}
        \text{MSE}_{0,\infty}^\text{CLMC} = \bbV[\widehat{Q}_{0,\infty}^\text{CLMC}] \leq \varepsilon^2 \quad\hbox{ and }\quad \calC[\widehat{Q}_{0,\infty}^\text{CLMC}] \leq C \varepsilon^{-2}.
    \end{equation*}
\end{theorem}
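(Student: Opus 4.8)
The plan is to use that the CLMC estimator is unbiased, so that $\text{MSE}_{0,\infty}^\text{CLMC} = \bbV[\widehat{Q}_{0,\infty}^\text{CLMC}]$, and that, with i.i.d.\ samples, this variance equals $\frac{1}{M}\bbV[Y]$ where
\begin{equation*}
    Y := \int_0^\infty \frac{1}{\bbP(L_r \geq \ell)}\, \frac{\textup{d}Q}{\textup{d}\ell}(\ell)\, \ind_{[0,L_r]}(\ell)\,\textup{d}\ell
\end{equation*}
is a single summand of \eqref{BeBa_eq:CLMC}. The whole argument then reduces to two $\varepsilon$-independent bounds: a bound $\bbV[Y] \leq \bbE[Y^2] \leq V$ on the per-sample second moment, and a bound $\bar{\calC}$ on the per-sample expected cost. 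Granting these, I would choose $M = \lceil V\varepsilon^{-2} \rceil$, which forces $\bbV[\widehat{Q}_{0,\infty}^\text{CLMC}] \leq \varepsilon^2$, while the total expected cost $M\,\bar{\calC} \leq C\varepsilon^{-2}$ follows at once.

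For the second moment I would first condition on the process $(Q(\ell))_{\ell \geq 0}$ and integrate out $L_r$. Writing $\bbP(L_r \geq t) = e^{-rt}$ and $\Delta(\ell) := \frac{\textup{d}Q}{\textup{d}\ell}(\ell)$, the product of indicators satisfies $\ind_{[0,L_r]}(\ell)\ind_{[0,L_r]}(\ell') = \ind_{\{L_r \geq \max(\ell,\ell')\}}$, so the two survival factors in the denominators cancel against $\bbP(L_r \geq \max(\ell,\ell'))$ and leave the clean kernel $e^{r\min(\ell,\ell')}$:
\begin{equation*}
    \bbE[Y^2] = \int_0^\infty\!\!\int_0^\infty \bbE[\Delta(\ell)\Delta(\ell')]\, e^{r\min(\ell,\ell')}\,\textup{d}\ell\,\textup{d}\ell'.
\end{equation*}
I would then split $\bbE[\Delta(\ell)\Delta(\ell')] = \cov(\Delta(\ell),\Delta(\ell')) + \bbE[\Delta(\ell)]\,\bbE[\Delta(\ell')]$ and bound the pieces separately: Cauchy--Schwarz with the variance bound \eqref{BeBa_eq:CLMC_variancedecay} gives $|\cov(\Delta(\ell),\Delta(\ell'))| \leq c_5\, e^{-\beta_C(\ell+\ell')/2}$, and the mean bound \eqref{BeBa_eq:CLMC_meandecay} gives $|\bbE[\Delta(\ell)]\,\bbE[\Delta(\ell')]| \leq c_4^2\, e^{-\alpha_C(\ell+\ell')}$. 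Using symmetry in $(\ell,\ell')$ to resolve the $\min$, each contribution becomes an elementary iterated integral whose outer integrand decays like $e^{(r-\beta_C)\ell'}$, respectively $e^{(r-2\alpha_C)\ell'}$.

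This is the crux of the proof and the point where the hypotheses are all consumed: the two outer integrals converge precisely because $r < \beta_C$ and $r < 2\alpha_C$, i.e.\ $r < \min\{\beta_C,2\alpha_C\}$, yielding a finite explicit constant $V = V(c_4,c_5,\alpha_C,\beta_C,r)$. For the cost, \eqref{BeBa_eq:CLMC_costincrease} gives $\calC[L_r] \leq c_6\, e^{\gamma_C L_r}$, so the per-sample expected cost is $c_6\,\bbE[e^{\gamma_C L_r}] = c_6 r \int_0^\infty e^{(\gamma_C - r)t}\,\textup{d}t = \frac{c_6 r}{r-\gamma_C} =: \bar{\calC}$, finite exactly because $r > \gamma_C$. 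Thus the same window $\gamma_C < r < \min\{\beta_C,2\alpha_C\}$ --- nonempty by the standing assumption --- simultaneously renders the variance integrals and the expected cost finite. The main obstacle is the variance computation: deriving the $e^{r\min(\ell,\ell')}$ kernel and, above all, controlling the off-diagonal covariance through Cauchy--Schwarz, since the hypotheses only supply a pointwise variance bound rather than a joint covariance decay. What remains is bookkeeping: collect the constants into $C := \bar{\calC}(V+1)$, valid for $\varepsilon \in (0,1)$.
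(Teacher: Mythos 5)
Your proposal is correct, and it reaches the theorem by a genuinely different decomposition than the paper's. The paper (following \cite{BeBa_Detommaso2019_CLMC}) splits $\bbV[\widehat{Q}_{0,\infty}^\text{CLMC}]$ via the law of total variance into $\bbE\big[\bbV[\,\cdot\,|L]\big] + \bbV\big[\bbE[\,\cdot\,|L]\big]$, applies Cauchy--Schwarz and Fubini inside the conditional-variance term, and evaluates the resulting exponential moments of $L$ in closed form, consuming $r<\beta_C$ and $r<2\alpha_C$ in the two terms respectively. You instead bound $\bbV[Y]\le\bbE[Y^2]$, integrate out $L_r$ \emph{first} --- your cancellation $\ind_{[0,L_r]}(\ell)\,\ind_{[0,L_r]}(\ell') = \ind_{\{L_r\ge\max(\ell,\ell')\}}$ together with $e^{-r\max(\ell,\ell')}e^{r\ell}e^{r\ell'}=e^{r\min(\ell,\ell')}$ is exactly right, granted the independence of $L_r$ and the process plus Tonelli, which your bounds justify since $\bbE\big[|\Delta(\ell)\Delta(\ell')|\big]\le\big(c_5 e^{-\beta_C\ell}+c_4^2 e^{-2\alpha_C\ell}\big)^{1/2}\big(c_5 e^{-\beta_C\ell'}+c_4^2 e^{-2\alpha_C\ell'}\big)^{1/2}$ --- and then split $\bbE[\Delta(\ell)\Delta(\ell')]$ into covariance plus product of means; the hypotheses are consumed in the same places ($r<\beta_C$, $r<2\alpha_C$, $r>\gamma_C$). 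The trade-off is sharpness versus robustness: since $\bbE[Y^2]=\bbV[Y]+\bbE[Y]^2$, your constant $V$ gives away the additive term $\bbE[\calQ-Q_0]^2$, which is harmless for the theorem, but the paper's exact constant $\frac{4c_5}{(\beta_C-r)\beta_C}+\frac{c_4^2 r}{(2\alpha_C-r)\alpha_C^2}$ is reused in Section~\ref{BeBa_subsec:optimal_exponential_distribution_parameter} to optimize $r$, which your lossier bound would do less precisely; conversely, your $\min$-kernel integrals converge (only the closed forms change) even at the special values such as $r=\frac{\beta_C}{2}$ and $r=\alpha_C$ that the paper excludes for readability. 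Two minor points to fix in a final write-up: the per-sample cost in CLMC is the integrated path cost $\bbE\big[\int_0^{L_r}\calC[\ell]\,\rd\ell\big]$ as in the paper's Equation~\eqref{BeBa_eq:CLMC-cost}, not the single finest solve $\calC[L_r]$ --- your computation $\bbE[e^{\gamma_C L_r}]=\frac{r}{r-\gamma_C}$ still controls it after multiplying by $c_6/\gamma_C$, with the same window $r>\gamma_C$ and the same $\varepsilon^{-2}$ scaling; and your restriction to $\varepsilon\in(0,1)$ to absorb the ceiling in $M$ is the right reading of the statement, mirroring both the MLMC theorem and the paper's own final step.
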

\begin{proof}
 The proof we give is for the special case of the unbiased estimator and includes more details on the variance estimate than in \cite{BeBa_Detommaso2019_CLMC}, to obtain a more accurate cost formula used in Section \ref{BeBa_subsec:optimal_exponential_distribution_parameter} to optimize the parameter $r$ of the exponential distribution. For better readability we give the proof under the further assumption that $r \notin \{ \frac{\beta_C}{2}, \beta_C, \frac{\alpha_C}{2}, \alpha_C\}$. These cases may be included by a further distinction of cases.
 As in \cite{BeBa_Detommaso2019_CLMC}, the variance is first split up by the law of the total variance into:
 \begin{equation*}
    \bbV[\widehat{Q}_{0,\infty}^\text{CLMC}] = \bbE\big[\bbV[\widehat{Q}_{0,\infty}^\text{CLMC}|L]\big] + \bbV\big[\bbE[\widehat{Q}_{0,\infty}^\text{CLMC}|L]\big].
 \end{equation*}
 These terms are then estimated separately. Since $L\sim \text{Exp}(r)$ is independent of the stochastic process $Q(\ell)$, the Cauchy--Schwarz inequality and Fubini's Theorem yield
 \begin{equation*}
  \begin{aligned}
    \bbE\big[\bbV[\widehat{Q}_{0,\infty}^\text{CLMC}|L]\big] = & \ \bbE\big[\text{Cov}[\widehat{Q}_{0,\infty}^\text{CLMC},\widehat{Q}_{0,\infty}^\text{CLMC}|L]\big] \\
    \leq & \ \frac{1}{M} \bbE \bigg[\int_{[0, L]^2} \frac{1}{\bbP(L\geq \ell)} \frac{1}{\bbP(L \geq \ell')} \bbV\bigg(\frac{\textup{d}Q}{\textup{d}\ell}\bigg)^\frac{1}{2} \bbV \bigg(\frac{\text{d}Q}{\text{d} \ell'} \bigg)^\frac{1}{2} \textup{d}\ell \textup{d}\ell' \bigg] \\ 
    =  & \ \frac{1}{M} \bbE \bigg[ \bigg(\int_0^{L} \frac{1}{\bbP(L\geq \ell)} \bbV\bigg(\frac{\textup{d}Q}{\textup{d}\ell}\bigg)^\frac{1}{2} \textup{d}\ell \bigg)^2 \bigg].
  \end{aligned}
 \end{equation*}
Next, the bound on the variance decay  \eqref{BeBa_eq:CLMC_variancedecay} is used to obtain
 \begin{equation*}
  \begin{aligned}
    \frac{1}{M} \bbE \bigg[ \bigg(\int_0^{L} & \frac{1}{\bbP(L\geq \ell)} \bbV\bigg(\frac{\textup{d}Q}{\textup{d}\ell}\bigg)^\frac{1}{2} \textup{d}\ell \bigg)^2 \bigg] \leq \ \frac{c_5}{M} \bbE \bigg[ \bigg(\int_0^{L} \frac{1}{\bbP(L\geq \ell)} e^{-\frac{\beta_C}{2} \ell} \textup{d}\ell \bigg)^2 \bigg] \\
    = & \ \frac{c_5}{M} \bbE \bigg[ \bigg(\int_0^{L}e^{(r -\frac{\beta_C}{2}) \ell} \textup{d}\ell \bigg)^2 \bigg] 
    =  \ \frac{1}{M} \frac{c_5}{(r - \frac{\beta_C}{2})^2} \bbE \big[(e^{(r - \frac{\beta_C}{2}) L} -1 )^2\big].
  \end{aligned}
 \end{equation*}    
 With the binomial formula this becomes
 \begin{equation*}
  \bbE \big[(e^{(r - \frac{\beta_C}{2}) L} -1 )^2\big] = \bbE \big[e^{(2 r - \beta_C) L}\big] - 2 \bbE \big[e^{(r - \frac{\beta_C}{2}) L}\big] + 1.
 \end{equation*}
 These terms are computed separately for $r < \beta_C$ using the exponential distributions density $f(\ell) = r e^{-r\ell}$:
 \begin{equation*}
 \begin{aligned}
  \bbE \big[e^{(2 r - \beta_C) L}\big] = & \ r\int_0^\infty e^{(r - \beta_C) \ell}\textup{d}\ell = \frac{r}{\beta_C - r} \\
  \quad\hbox{ and }\quad
  \bbE \big[e^{(r - \frac{\beta_C}{2}) L}\big] = & \  r\int_0^\infty e^{- \frac{\beta_C}{2} \ell}\textup{d}\ell = \frac{2 r}{\beta_C}.
  \end{aligned}
 \end{equation*}
 The second term is estimated next, using the fact that $\bbV[X - 1] = \bbV[X]$ for any random variable $X$. For $r <2\alpha_C$ we compute
 \begin{equation*}
 \begin{aligned}
  \bbV\big[\bbE[\widehat{Q}_{0,\infty}^\text{CLMC}|L]\big] = & \ \frac{1}{M} \bbV \bigg[ \int_0^{L} \frac{1}{\bbP(L\geq \ell)} \bbE\bigg(\frac{\textup{d}Q}{\textup{d}\ell}\bigg) \textup{d}\ell\bigg]  \leq \frac{c_4^2}{M} \bbV \bigg[ \int_0^{L} e^{(r - \alpha_C)\ell} \textup{d}\ell\bigg] \\
 = & \ \frac{c_4^2}{M(r-\alpha_C)^2}\Big( \bbE \big[e^{2(r - \alpha_C)L} \big] - \bbE\big[e^{(r - \alpha_C)L} \big]^2\Big).
 \end{aligned}
 \end{equation*}
 These terms are computed separately
 \begin{equation*}
 \begin{aligned}
\bbE \big[e^{2(r - \alpha_C)L} \big] = & \ r\int_0^\infty e^{(r - 2\alpha_C) \ell}\textup{d}\ell = \frac{r}{2\alpha_C - r} \\
\quad\hbox{ and }\quad
\bbE\big[e^{(r - \alpha_C)L} \big]^2 = & \ r\int_0^\infty e^{-\alpha_C \ell}\textup{d}\ell= \frac{r^2}{\alpha_C^2}.
 \end{aligned}
 \end{equation*}
Overall, using $\frac{r}{\beta_C -r} + 1 = \frac{\beta_C}{\beta_C - r}$ we obtain with $r<\beta_C, 2\alpha_C$:
\begin{equation*}
\begin{aligned}
    \bbV[\widehat{Q}_{0,\infty}^\text{CLMC}] \ & \leq  \frac{c_5}{M (r - \frac{\beta_C}{2})^2}\bigg(\frac{\beta_C}{\beta_C - r} - \frac{4 r}{\beta_C}\bigg) +  \frac{c_4^2}{M(r-\alpha_C)^2}\bigg(\frac{r}{2\alpha_C - r} - \frac{r^2}{\alpha_C^2}\bigg) \\
     \ & = \frac{1}{M} \bigg(\frac{4 c_5}{(\beta_C - r) \beta_C} +  \frac{c_4^2 r}{(2\alpha_C - r) \alpha_C^2} \bigg).
\end{aligned}
    \label{BeBa_eq:CLMC-variance}
\end{equation*}
For the algorithm cost, the unbiased estimator yields with $r > \gamma_C$
\begin{equation}
\begin{aligned}
 \calC[\widehat{Q}_{0,\infty}^\text{CLMC}] = & \ M \,\bbE\bigg[ \int_0^{L} \calC[\ell] \textup{d}\ell\bigg] \leq \frac{M c_3}{\gamma_C} \bbE \big[e^{\gamma_C L} - 1 \big] \\
 = & \ \frac{M c_6}{\gamma_C} \bigg(r \int_0^{\infty} e^{(\gamma_C - r)\ell} \textup{d}\ell - 1\bigg) =  \frac{M c_6}{r - \gamma_C}.
 \end{aligned}
 \label{BeBa_eq:CLMC-cost}
\end{equation}
 To obtain a variance and equivalently a MSE smaller than $\varepsilon^2$ we have to choose the sample size as
 \begin{equation}
  M \geq \bigg\lceil \frac{1}{\varepsilon^2} \bigg(\frac{4 c_5}{(\beta_C - r) \beta_C} +  \frac{c_4^2 r}{(2\alpha_C - r) \alpha_C^2} \bigg)\bigg\rceil.
  \label{BeBa_eq:CLMC-sample-size}
 \end{equation}
 Inserting this into the cost formula finishes the proof.
\end{proof}
\subsection{Optimal exponential distribution parameter}
\label{BeBa_subsec:optimal_exponential_distribution_parameter}
 The optimal choice of the parameter $r$ in the exponential distribution is crucial for an optimal time to error performance of the CLMC method. We demonstrate a way to determine an optimal parameter, which minimizes the total cost of CLMC while achieving a desired tolerance. Treating the sample size \eqref{BeBa_eq:CLMC-sample-size} as a continuous quantity and inserting it into the cost formula \eqref{BeBa_eq:CLMC-cost} yields
 \begin{equation}
 \begin{aligned}
  \ \calC[\widehat{Q}_{0,\infty}^\text{CLMC}] & \ \leq\frac{c_6}{\varepsilon^2 (r - \gamma_C)}\bigg(\frac{4 c_5}{(\beta_C - r) \beta_C} +  \frac{c_4^2 r}{(2\alpha_C - r) \alpha_C^2} \bigg)\\
  & \ =: \widehat{\calC}^\text{CLMC}(\varepsilon,c_4,c_5,c_6,\alpha_C,\beta_C,\gamma_C,r).
  \label{BeBa_eq:CLMC-cost-optimize}
  \end{aligned}
 \end{equation}
This cost formula depends on the given tolerance $\varepsilon$, the estimated parameters $c_4$, $c_5$, $c_6$, $\alpha_C$, $\beta_C$, $\gamma_C$ from Section \ref{BeBa_subsec:CLMC_parameter_estimates} and the exponential distribution parameter $r \in \bbR_{>0}$.
Using the provided lower and upper bound for $r$ from the complexity theorem \ref{BeBa_thm:CLMC-complexity} the cost is minimized by determining
 \begin{equation}
        \hat{r} = \argmin_{\gamma_C < r < \min\{\beta_C,2 \alpha_C\}} \widehat{\calC}^\text{CLMC}(\varepsilon,c_4,c_5,c_6,\alpha_C,\beta_C,\gamma_C,r).
    \label{BeBa_eq:CLMC_optimal_r}
 \end{equation}
\subsection{CLMC on the fly algorithm}
\label{BeBa_subsec:CLMC_algorithm}
The CLMC Algorithm \ref{BeBa_alg:CLMC} that we use in our numerical experiments uses an on the fly estimate of the variance $\bbV[\widehat{Q}_{0,\infty}^\text{CLMC}]$ as an estimator for the $\text{MSE}$ in each simulation run.
As in \cite{BeBa_Detommaso2019_CLMC}, to obtain a numerically computable estimator we define 
\begin{equation*}
 \bigg(\frac{\textup{d}Q}{\textup{d}\ell}\bigg)^{(k)} (\ell) := \frac{Q_j^{(k)} - Q_{j-1}^{(k)}}{\ell_{j}^{(k)} - \ell_{j-1}^{(k)}} \quad\hbox{ for }\quad \ell \in (\ell_{j-1} ,\ell_{j}),
\end{equation*}
through linear interpolation with samples $Q_j^{(k)}$ as approximations to the quantity of interest at levels $\ell_j$ for $j\geq 1$.
Using this definition together with $L_r \sim \text{Exp}(r)$, the discrete CLMC estimator becomes
\begin{equation}
    \widehat{Q}_{0,\infty}^\text{CLMC} = \frac{1}{M} \sum_{k=1}^M \sum_{j=1}^{J^{(k)}} w_j^{(k)} \big(Q_j^{(k)} - Q_{j-1}^{(k)}\big),
 \label{BeBa_eq:CLMC-discretized}
\end{equation}
with
\begin{equation}
    w_j^{(k)} := \frac{\exp(r \tilde{\ell}_j^{(k)}) - \exp(r \ell_{j-1}^{(k)})}{r(\ell_{j}^{(k)} - \ell_{j-1}^{(k)})},
 \label{BeBa_eq:CLMC-weights}
\end{equation}
and
\begin{equation}
    J^{(k)} := \min\{j \geq 1: \ell_j^{(k)} \geq L_r^{(k)} \}, \quad \tilde{\ell}_j^{(k)} := \min\{\ell_j^{(k)}, L_r^{(k)}\}.
    \label{BeBa_eq:CLMC-max-level-index}
\end{equation}
With the definition
 \begin{equation*}
    Y^{(k)} := \sum_{j=1}^{J^{(k)}} w_j^{(k)}\big(Q_j^{(k)} - Q_{j-1}^{(k)}\big),
 \end{equation*}
 the discretized CLMC estimator \eqref{BeBa_eq:CLMC-discretized} can be interpreted as a MC estimate for some random variable $Y$ and an unbiased estimator for its variance is given by
\begin{equation}
    \bbV[\widehat{Q}_{0,\infty}^\text{CLMC}] \approx \frac{1}{M-1}\bigg(\frac{1}{M}\sum_{k=1}^M \big(Y^{(k)}\big)^2 - \bigg(\frac{1}{M}\sum_{k=1}^M Y^{(k)}\bigg)^2\bigg) \quad\hbox{ for }\quad M \in \N. 
 \label{BeBa_eq:CLMC-variance-estimate}
\end{equation}
The samplewise continuous level of refinement for each sample $k$ is defined by
\begin{equation}
    \ell_j^{(k)} (\omega):= - \log \bigg(\frac{e_j^{(k)}}{e_0^{(k)}} \bigg) \quad\hbox{ for }\quad j=0,...,J^{(k)} \text{ and } k=1,...,M,
    \label{BeBa_eq:CLMC-level}
\end{equation}
naturally providing values $\ell_0^{(k)} = 0$ for all $k = 1,...,M$. The values $(e_j^{(k)})_{j=0}^{J^{(k)}}$ are computable a-posteriori error estimators, as e.g. provided by Equation \eqref{BeBa_eq:final_estimator}, where we define
%
 $ e_j^{(k)} := \bigg(\sum_{K \in \calK_j^{(k)}} \big(\eta_K^{(k)}\big)^2\bigg)^\frac{1}{2}  \quad\hbox{ for }\quad j=0,...,J^{(k)}$ and $k=1,...,M$.
%
Here $\eta_K^{(k)}$ is from Equation \eqref{BeBa_eq:estimator_primal} and
$\calK_j^{(k)}$ is a mesh at level $j$. The main reason for using the relative error in the level formula \eqref{BeBa_eq:CLMC-level} is due to the inaccessible sample-dependent constant $C>0$ in Equation \eqref{BeBa_eq:final_estimator}, see \cite{BeBa_Detommaso2019_CLMC}.
\begin{remark}
  Cutting the refinement of certain samples, that reach an incomputable linear system of dimension $\text{DOF}_{max}$, resp. maximal level $\bar{L}$, introduces only a negligible bias for practical values of $\varepsilon$. The probability, that $L^{(k)} > \bar{L}$ is estimated, cf.~\cite{BeBa_Detommaso2019_CLMC}, for the exponential distribution as
  \begin{equation}
    M \,\widetilde{\bbP}(L \geq \bar{L}) = M \exp(-r \bar{L}).
  \end{equation}
  If this rare event occurs, we approximate $Q^{(k)}(\ell)\approx Q^{(k)}(\bar{L})$ for $\ell \in [\bar{L}, L^{(k)}]$ as suggested in \cite{BeBa_Detommaso2019_CLMC}.
\end{remark}
\begin{algorithm}
\caption{CLMC}
\label{BeBa_alg:CLMC}
\begin{algorithmic}
\REQUIRE $\varepsilon > 0$, $M_{ini} \in \N$ number of initial samples, $\text{DOF}_{max}$ maximal degrees of freedom on the computing device, optimal $\hat{r}$ by \eqref{BeBa_eq:CLMC_optimal_r}.

\STATE $\widehat{Q}_{0,\infty}^\text{CLMC} \gets 0$
\STATE $k \gets 1$
\WHILE{$\bbV[\widehat{Q}_{0,\infty}^\text{CLMC}] > \varepsilon^2$}
 \STATE Draw $L_{\hat{r}}^{(k)} \sim \text{Exp}(\hat{r})$
 \STATE $j \gets 0$
 \STATE $\ell_{tmp} \gets 0$
 \WHILE{$\ell_{tmp} \leq L_{\hat{r}}^{(k)}$ and $\text{DOF}_\ell < \text{DOF}_{max}$}
        \STATE Evaluate a sample of $Q_j^{(k)}$
        \STATE Compute $\ell_j^{(k)}$ by \eqref{BeBa_eq:CLMC-level}
        \STATE $\ell_{tmp} \gets \ell_j^{(k)}$
        \STATE $j \gets j+1$
 \ENDWHILE
 \STATE Compute weights $(w_j^{(k)})_{j=1}^{J^{(k)}}$ by \eqref{BeBa_eq:CLMC-weights}
  \STATE $\widehat{Q}_{0,\infty}^\text{CLMC} \gets\widehat{Q}_{0,\infty}^\text{CLMC} + \sum_{j=1}^{J^{(k)}}w_j^{(k)} \big(Q_j^{(k)} - Q_{j-1}^{(k)}\big)$
  \IF{$k \leq M_{ini}$}
	\STATE Set variance $\bbV[\widehat{Q}_{0,\infty}^\text{CLMC}] = 1$
  \ELSE
	\STATE Estimate variance $\bbV[\widehat{Q}_{0,\infty}^\text{CLMC}]$ by \eqref{BeBa_eq:CLMC-variance-estimate} with $k$ samples
  \ENDIF
 \STATE $k \gets k+1$
 \ENDWHILE
 \STATE $M \gets k$
 \STATE $\widehat{Q}_{0,\infty}^\text{CLMC} \gets \frac{\widehat{Q}_{0,\infty}^\text{CLMC}}{M}$ 
 \end{algorithmic}
\end{algorithm}
\section{Quasi continuous level Monte Carlo method}
\label{BeBa_sec:quasi_continuous_level_monte_carlo_method}
This section examines the generation of random samples of the (maximal level) distribution $L_r$ in CLMC, see Equation \eqref{BeBa_eq:CLMC}. The distribution of $L_r$ is supposed to mimic the exponential decay in the assumptions on the convergence of the quantity of interest just like the decreasing sample sizes of MLMC over the levels. Deviations from this distribution will definitely affect the performance of the CLMC estimator in practice.
The standard choice for generating random numbers on a computing device are pseudo-random number generators. They are important in practice for their speed in number generation and their reproducibility.
For practical applications with reasonable sample numbers of $10^2-10^4$ a pseudo-random number generator might under- or oversample certain regions of the distribution. We recall, that the exponentially drawn random numbers $L_r^{(k)}$ for each sample enter in the practical CLMC estimator \eqref{BeBa_eq:CLMC-discretized} indirectly as the upper index $J^{(k)}$ in the sample sum and determine the maximal level of refinement of each sample. Hence, an under- or oversampling of certain regions in the exponential distribution leads to, e.g., not enough or too many samples computed up to a high resolution, distorting the CLMC simulation.
We aim at reducing this distortion in the practical CLMC estimator as good as possible and propose to sample the exponentially distributed random numbers in CLMC by using quasi-random, $[0,1)$-uniformly distributed, low-discrepancy numbers in combination with a transformation. We call this approach quasi continuous level Monte Carlo (QCLMC).
To generate a sample $L_r^{(k)}$, first generate a $[0,1)$-uniformly distributed quasi-random number and transform it to be an exponentially distributed quasi-random number with parameter $r$ 
through the inverse cumulative distribution function of the exponential distribution
    $F^{-1}(x,r) = \frac{-\text{ln}(1-x)}{r}$ for $\quad 0 \leq x < 1$.
Since the (maximal level) distribution in CLMC is one-dimensional regardless of the PDE dimension, there is no additional cost to generating the exponentially distributed quasi-random numbers.

\begin{remark}
    A complexity theorem for the QCLMC method based on the concept of $F$-discrepancy, which explicitly treats the quasi-random numbers as a deterministic number sequence, can be found in \cite{BeBa_Beschle2023_QCLMC}. In this work, we remain with its motivation and introduction in this section and demonstrate the numerical benefits in Sections \ref{BeBa_subsec:pseudo-random_quasi-random_numbers} and  \ref{BeBa_subsec:performance_of_mlmc_clmc_qclmc} showing its advantages over CLMC and MLMC in terms of time to error performance.
\end{remark}

\section{Random PDE model}
\label{BeBa_sec:random_pde_model}
In order to demonstrate and compare the performances of MLMC, CLMC and QCLMC we consider the quantity of interest $\calQ$ to be a functional of the solution of a random PDE. To this end, we consider a random elliptic jump-diffusion PDE, cf. \cite{BeBa_Babuska1970_DiscCoeffFEM, BeBa_Barth2018_Elliptic} as a simple mathematical model for subsurface flow through porous media.
Let $(\Omega,\calA,\bbP)$ be a complete probability space and $\calD \subset \bbR^d$, $d=1,2,3$ be a bounded and connected Lipschitz domain. The linear, random elliptic PDE with solution $u: \Omega \times \calD \to \bbR$ is given by
\begin{equation}
    -\nabla \cdot (a(\omega,x) \nabla u(\omega,x)) = f(x) \quad \text{in} \; \Omega \times \calD,
\label{BeBa_eq:PDE}
\end{equation}
where $f: \calD \to \bbR$ is the source term and $a : \Omega \times \calD \to \bbR$ is the random coefficient, with spatial discontinuities of random position. 
The boundary $\partial \calD$  is assumed to be Lipschitz continuous and equipped with homogeneous Dirichlet boundary conditions
\begin{equation*}
    \begin{aligned}
    u(\omega,x) &= 0 \quad \text{on} \; \Omega \times \partial \calD.
    \end{aligned}
\end{equation*}
Let $\calT: \Omega \to \calB(\calD)$, $\omega \mapsto (\calT_1(\omega),\calT_2(\omega))$ be a random disjoint partition  of $\calD$, i.e., $\calD = \calT_1(\omega) \cup \calT_2(\omega)$ with $\calT_1(\omega) \cap \calT_2(\omega) = \emptyset$ for each $\omega \in \Omega$, and let $P \in \bbR_{>0}$ be a deterministic positive real number. Then, the coefficient is defined by
\begin{equation}
    a : \Omega \times \calD \to \bbR, \quad (\omega,x) \mapsto \ind_{\calT_1 (\omega)}(x) \,P + \ind_{\calT_2 (\omega)}(x) \,P^{-1}.
\label{BeBa_eq:coefficient}
\end{equation}
Two different examples for the coefficient that are considered in the simulations are:

\textbf{Example $1$: Box coefficient}
 The first example is generated by sampling coordinates $x,y \sim \calU([0.4,0.6]$ and an edge length $l \sim \calU([0.2,0.3])$ for each sample $\omega \in \Omega$, where $x,y$ are the centers of a square with corresponding edge length $l$. The outside of the box is $\calT_1(\omega)$ and the inside of the box is $\calT_2(\omega)$. This coefficient introduces a peak at the random box position with steep gradients in the solution, see Figure \ref{BeBa_fig:box_cross_coefficient} (left). Different values of $P$ result in different magnitudes of the peak.
 
\textbf{Example $2$: Cross coefficient}
 The second example is generated by sampling coordinates $x,y \sim \calU([0.4,0.6]$ for each sample $\omega \in \Omega$, where $x$ yields a vertical line through $(x,0)$ and $(x,1)$ and $y$ yields a horizontal line through $(0,y)$ and $(1,y)$. This splits the domain into four squares. Two diagonally opposing squares are joined together to be $\calT_1(\omega)$ and $\calT_2(\omega)$, respectively. This coefficient admits two peaks diagonally opposing each other, see Figure \ref{BeBa_fig:box_cross_coefficient} (right).
\begin{remark}
 Both these coefficients can be sampled directly without the need of an approximation, as done for, e.g. log-normal Gaussian random fields, cf.~\cite{BeBa_Barth2018_Elliptic}.
\end{remark}
\begin{figure}[tbhp]
\centering
 \includegraphics[width=0.51\textwidth]{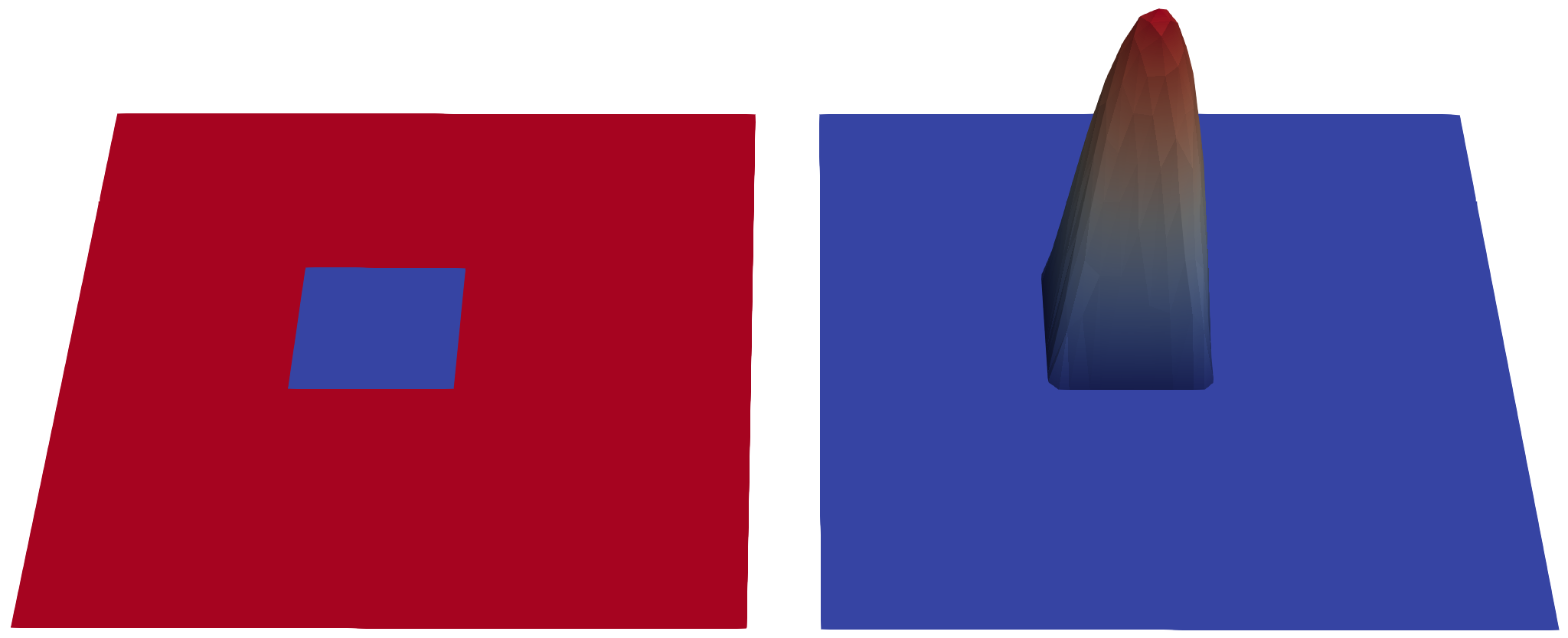}
 \includegraphics[width=0.48\textwidth]{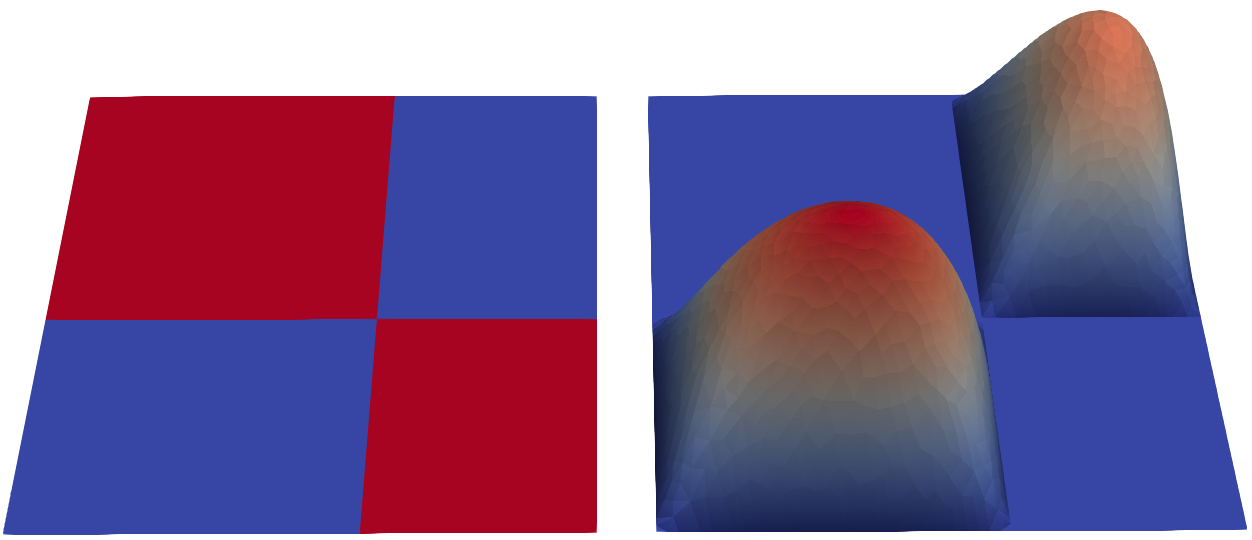}
 \caption{Visualization of single samples of the box coefficient with its PDE solution that has one peak (left) and the cross coefficient with its PDE solution that has two peaks (right).}
 \label{BeBa_fig:box_cross_coefficient}
\end{figure}
\begin{remark}
    The pathwise weak solutions in both examples exhibit regularity $u(\omega) \in H^{1+\kappa}(\calD)$, for $\mathbb{P}\textup{-almost all }\omega \in \Omega$, where $\kappa = \calO(P^{-1})$, see \cite[Lemma~$4.3$]{BeBa_Petzoldt2001_Regularity}. This implies deteriorated spatial convergence rates for numerical algorithms as the finite element method with uniform (un)structured meshes, cf.~\cite{BeBa_Babuska1970_DiscCoeffFEM}.
\end{remark}
\subsection{Pathwise weak formulation}
\label{BeBa_subsec:pathwise_weak_formulation}
To derive a pathwise weak formulation of the problem we recall the definition of standard Sobolev spaces $H^1(\calD)$, cf.~\cite[Section~$5.2.2$]{BeBa_Evans2010_PDE}, equipped with the norm
   $\|v\|_{H^1(\calD)}^2 := \int_{\calD} |v|^2 + \| \nabla v\|_2^2 \textup{d}x$ for $v \in H^1(\calD)$,
where $\| \cdot \|_2$ denotes the Euclidean norm on $\calD$. Let $H_0^1(\calD)$ be the space of $H^1(\calD)$-functions with vanishing trace on the boundary $\partial\calD$, $\omega \in \Omega$ be fixed and $f\in L^2(\calD)$. Multiplying the random PDE \eqref{BeBa_eq:PDE} by a test function $v \in H_0^1(\calD)$ and integrating over the spatial domain $\calD$ we obtain for $u(\omega) \in H_0^1(\calD)$,
		$\int_{\calD} a(\omega,x) \nabla u(\omega,x) \cdot \nabla v(x) \textup{d}x = \int_{\calD} f(x) v(x) \textup{d}x$,
with integration by parts.  
We define a suitable bilinear and linear form to ease the notation and write the weak formulation as
\begin{equation}
	B_{a(\omega)}(u(\omega),v) = F(v).
	\label{BeBa_eq:weak-form}
\end{equation}
Pathwise existence and uniqueness of a solution $u(\omega) \in H_0^1(\calD)$ follow by the Lax--Milgram lemma.
The bilinear form from Equation \eqref{BeBa_eq:weak-form} induces the energy norm via
\begin{equation}
   \|\cdot\|_E: H^1(\calD) \to \bbR, \quad v \mapsto \sqrt{B_{a}(v, v)} \quad\hbox{ for }\quad v \in H^1(\calD).
   \label{BeBa_eq:energynorm}
\end{equation}
\subsection{Finite element approximation}
\label{BeBa_subsec:finite_element_approximation}
For the numerical solution of the pathwise weak problem \eqref{BeBa_eq:weak-form} we consider the Finite Element method (FE), see, e.g., \cite[Chapters~$2$~and~$3$]{BeBa_Knabner2003_NumericalMethods}, \cite[Chapter~$8$]{BeBa_Hackbusch2017_EllipticEquations} and \cite{BeBa_Brenner2008_FEM}. In our approximation, we specifically allow for sample-dependent meshes with sample-dependent approximation spaces to generate the sample-adapted model hierarchies in the CLMC method, see Section \ref{BeBa_subsec:CLMC}. For fixed $\omega \in \Omega$ and some resolution $\ell \in \N$, let $\calK_\ell (\omega) := \bigcup_{i=1}^{m_\ell (\omega)} K_i (\omega)$ be a sample-dependent triangular mesh of the domain $\calD$ into elements $K_i$ for $i=1,...,m_\ell (\omega)$, with element diameter $h_{K_i}$. 
For $d=1$ this corresponds to an interval mesh and for $d=3$ to a tetrahedral mesh. Let us adopt the wording for $d=2$.
To find suitable FE approximations of $u (\omega)$ we use a standard Galerkin scheme with a finite dimensional subspace $V_\ell (\omega) \subset H_0^1(\calD)$ of resolution $\ell \in \N_0$ consisting of the classical piecewise linear basis functions. The discrete version of the weak formulation \eqref{BeBa_eq:weak-form} is to find $u_{\ell}(\omega) \in V_\ell (\omega)$ as a solution to
\begin{equation}
    B_{a(\omega)}(u_{\ell} (\omega), v_\ell (\omega)) = F (v_\ell (\omega)) \quad\hbox{ for }\quad v_\ell(\omega) \in V_\ell(\omega).
\label{BeBa_eq:weak-form-FEM}
\end{equation}
\section{A-posteriori error estimation}
\label{BeBa_sec:a_posteriori_error_estimation}
We aim to generate sample-dependent meshes to be used in the CLMC method. Therefore, we consider a-posteriori error estimation techniques. In FE analysis, a-posteriori error estimation is commonly used to guide mesh refinement procedures. The basic idea is to only refine elements of meshes that yield a high contribution to the error in the solution approximation. For an overview of different a-posteriori error estimation techniques see \cite{BeBa_Graetsch2005_APostError}. The motivation to use a-posteriori error estimators for our model problem stems from the fact, that the discontinuities in the coefficient imply easy to approximate flat areas as well as difficult to approximate steep gradients in the solution, as illustrated well by the examples in Figure \ref{BeBa_fig:box_cross_coefficient}. The flat areas allow for coarse meshes, whereas the steep gradients require high resolution to result in an accurate $H^1$-norm approximation. For $\omega \in \Omega$, the gradient of the numerical solution, $\nabla u_\ell (\omega)$, is an indicator for its steepness. It is included in a natural way in the $H^1$-norm of the numerical solution $\|u_\ell (\omega)\|_{H^1(\calD)}$ for all $\omega \in \Omega$. Thus, 
we define the $H^1$-norm as our quantity of interest
  $\calQ: H^1(\calD) \to \bbR$ with $ \calQ(u_\ell (\omega)) := \|u_\ell (\omega)\|_{H^1(\calD)}$ for $\omega \in \Omega$,
and state a samplewise a-posteriori estimator for its approximation error
\begin{equation}
    \big|\calQ(u(\omega)) - \calQ(u_{\ell}(\omega))\big| \;\big(= \big|\|u (\omega)\|_{H^1(\calD)} - \|u_\ell (\omega)\|_{H^1(\calD)}\big|\;\big)  \;\;\hbox{ for }\;\; \omega \in \Omega.
    \label{BeBa_eq:qoi_error}
\end{equation}
We achieve this by 
applying the inverse triangle inequality and the norm equivalence between the energy norm and the $H^1$-norm to an  a-posteriori error estimate in the energy norm for discontinuous coefficients, see \cite{BeBa_Bernardi2000_APostDisc, BeBa_Petzoldt2002_DiscCoeff}.
We fix $\omega \in \Omega$ and omit the dependence of quantities on $\omega \in \Omega$ in 
this section for a better readability. 
First, we compute
\begin{equation}
  \begin{aligned}
    \big|\|u\|_{H^1(\calD)} - \|u_\ell\|_{H^1(\calD)}\big| \leq \|u - u_\ell\|_{H^1(\calD)} \leq C_0 \|u - u_\ell\|_{E},
  \end{aligned}
  \label{BeBa_eq:error_representation}
\end{equation}
where $C_0>0$ is the norm equivalence constant independent of $u, u_\ell$. 

Next, we state the a-posteriori error estimator for the energy norm error for the 
problem \eqref{BeBa_eq:weak-form} in terms of numerically computable quantities. First, we give two small preliminary definitions.
Let $\calK$ be a mesh as described in Section \ref{BeBa_subsec:finite_element_approximation} and $v$ a function, that is piecewise constant on each element $K \in \calK$. The definition of the jump of $v$ across the edge $\gamma=K \cap K'$ of two adjacent elements $K$ and $K'$ is given by
 $[v]_\gamma = \lim_{t\to 0+} v(x + t \vv{n}_{\gamma}|_K) - \lim_{t\to 0+} v(x - t \vv{n}_{\gamma}|_K) 
            =  \lim_{t\to 0+} v(x + t \vv{n}_{\gamma}|_K) - \lim_{t\to 0+} v(x + t \vv{n}_{\gamma}|_{K'})$,
where $\vv{n}_{\gamma}|_K$ and $\vv{n}_{\gamma}|_{K'}$ are the respective unit normal vectors pointing in opposite directions. 
Further, we define the set containing the edges of an element $K \in \calK$, which are not on the boundary of the domain $\partial \calD$ by $\calE_{K}$.

The upcoming result, \cite[Theorem~$2.9$]{BeBa_Bernardi2000_APostDisc}, is an extension of standard residual based a-posteriori error estimates for the error in the energy norm, cf.~\cite{BeBa_Babuska1978_APostError,BeBa_Ainsworth1997_APostError}, to the linear elliptic PDE model with a piecewise constant and discontinuous coefficient.
%
\begin{theorem}[A-posteriori energy norm error estimator]
\label{BeBa_thm:energynorm-estimator-primal}
Let $\ell \in \N$ be arbitrary but fixed and let $\calK_\ell$ be a mesh as defined in Section \ref{BeBa_subsec:finite_element_approximation} that is aligned to the discontinuities in the coefficient \eqref{BeBa_eq:coefficient}. Let $u_{\ell} \in V_\ell$ be the computed piecewise linear finite element approximation of \eqref{BeBa_eq:weak-form-FEM}, $u \in H_0^1(\calD)$ be the unique weak solution of \eqref{BeBa_eq:weak-form} and let $f_\ell$ be a finite element approximation to $f$. Denote by $a_K$ the constant value of $a$ on element $K$ and set $a_\gamma = \max\{a_K,a_{K'}\}$ for each edge $\gamma = K \cap K'$ between two adjacent elements $K,K' \in \calK_\ell$. Further, denote by $h_K>0$ the diameter of element $K$ and by $h_\gamma>0$ the length of edge $\gamma$ and set
\begin{equation}
    \begin{aligned}
        \eta_K^2 :=  h_K^2 \, a_K^{-1} \|f_\ell\|_{L^2(K)}^2 + \frac{1}{2}\sum_{\gamma  \in \calE_K} h_\gamma a_\gamma^{-1}\|[\vv{n}_\gamma|_K a \nabla u_{\ell}]\|_{L^2(\gamma)}^2.
    \end{aligned}  
    \label{BeBa_eq:estimator_primal}
\end{equation}
There exist positive constants $C_1$ and $C_2$, depending only on $\calK_\ell$ such that
\begin{equation}
    \begin{aligned}
        \|u - u_{\ell}\|_E \leq C_1 \bigg\{ \sum_{K \in \calK_\ell}  \eta_K^2 + h_K^2 \, a_K^{-1}\|f - f_\ell \|_{L^2(K)}^2 \bigg\}^\frac{1}{2}, 
    \end{aligned}
    \label{BeBa_eq:upper_bound_1}
\end{equation}
and
\begin{equation}
    \eta_K \leq C_2 \bigg\{\|u - u_{\ell}\|_{E,w_K}^2 + \sum_{K' \in w_K} h_{K'}^2 a_{K'}^{-1} \|f - f_\ell\|_{L^2(K')}^2 \bigg\}^\frac{1}{2},
     \label{BeBa_eq:lower_bound_1}
\end{equation}
where $w_K$ denotes the union of all elements sharing an edge with $K$.
\end{theorem}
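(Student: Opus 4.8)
The plan is to establish the two bounds \eqref{BeBa_eq:upper_bound_1} (reliability) and \eqref{BeBa_eq:lower_bound_1} (efficiency) separately by the standard residual-based a-posteriori machinery, in the spirit of \cite{BeBa_Babuska1978_APostError, BeBa_Ainsworth1997_APostError}, but carried out throughout in the coefficient-weighted energy norm \eqref{BeBa_eq:energynorm} so that the constants do not degenerate as the jump grows. The common starting point is the error--residual identity. Writing $e := u - u_\ell$ and using that $u_\ell \in V_\ell$ solves \eqref{BeBa_eq:weak-form-FEM} while $u$ solves \eqref{BeBa_eq:weak-form}, Galerkin orthogonality gives $B_{a}(e, v_\ell) = 0$ for every $v_\ell \in V_\ell$, so that $\|e\|_E^2 = B_{a}(e, e - v_\ell) = F(e - v_\ell) - B_{a}(u_\ell, e - v_\ell)$ for an arbitrary $v_\ell \in V_\ell$ at our disposal.

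For the upper bound I would integrate the right-hand side by parts element by element. Since $u_\ell$ is piecewise linear and $a$ is piecewise constant on each $K$ (the mesh being aligned to the discontinuities), the interior residual $\nabla\cdot(a\nabla u_\ell)$ vanishes inside each element, so only the source $f_\ell$ and the inter-element flux jumps $[\vv{n}_\gamma|_K\, a\nabla u_\ell]$ remain, up to the data-oscillation contribution $f - f_\ell$. Choosing $v_\ell$ to be a coefficient-weighted quasi-interpolant (Cl\'ement / Scott--Zhang type) of $e$ and applying Cauchy--Schwarz reduces everything to local interpolation and trace bounds. The decisive point is that, because $\|e\|_{E,K} = a_K^{1/2}\|\nabla e\|_{L^2(K)}$, the weighted estimates produce exactly the factors $h_K a_K^{-1/2}$ on element terms and $h_\gamma^{1/2} a_\gamma^{-1/2}$ on edge terms; squaring and summing then yields $\sum_K \eta_K^2$ together with the oscillation term, while the finite overlap of the interpolation patches keeps $C_1$ bounded.

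For the lower bound I would use Verf\"urth's bubble-function technique localized to the coefficient. Testing the residual equation with an element bubble $b_K f_\ell$ supported on $K$, and using equivalence of norms on the finite-dimensional polynomial space together with an inverse inequality, bounds $h_K a_K^{-1/2}\|f_\ell\|_{L^2(K)}$ by $\|e\|_{E,K}$ plus oscillation; testing with an edge bubble supported on the two-element patch $w_K$ bounds $h_\gamma^{1/2} a_\gamma^{-1/2}\|[\vv{n}_\gamma|_K\, a\nabla u_\ell]\|_{L^2(\gamma)}$ by $\|e\|_{E,w_K}$ plus oscillation. Adding the two contributions gives \eqref{BeBa_eq:lower_bound_1}.

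I expect the main obstacle to be robustness with respect to the coefficient, that is, showing $C_1$ and $C_2$ depend only on $\calK_\ell$ through shape regularity and not on the size of the jump between $P$ and $P^{-1}$. This is precisely where the weighting by $a_K^{-1}$ and by $a_\gamma^{-1} = (\max\{a_K, a_{K'}\})^{-1}$ is essential: one must establish coefficient-weighted quasi-interpolation estimates, which in general require a quasi-monotonicity condition on the distribution of $a$ around each vertex. Since here $a$ takes only the two values $P, P^{-1}$ on a simple geometric partition aligned with the mesh \eqref{BeBa_eq:coefficient}, I would verify that this quasi-monotonicity holds, so that each vertex patch admits a monotone path to its highest-coefficient element; this makes the weighted estimates applicable and the constants jump-independent, which is exactly the robustness imported from \cite{BeBa_Bernardi2000_APostDisc, BeBa_Petzoldt2002_DiscCoeff}.
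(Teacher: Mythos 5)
You should know at the outset that the paper does not actually prove this theorem: it is quoted as \cite[Theorem~$2.9$]{BeBa_Bernardi2000_APostDisc}, and Remark \ref{BeBa_rmk:primal_estimates} merely summarizes the key ingredient of the cited proof. Measured against that source, your sketch is essentially a faithful reconstruction of the genuine argument: Galerkin orthogonality and elementwise integration by parts (correctly noting that the interior residual reduces to the source term because $u_\ell$ is piecewise linear and $a$ is piecewise constant on the aligned mesh, so only $f_\ell$, the flux jumps $[\vv{n}_\gamma|_K\, a \nabla u_\ell]$, and the oscillation $f - f_\ell$ survive), a coefficient-weighted Cl\'ement-type quasi-interpolant for the reliability bound \eqref{BeBa_eq:upper_bound_1}, and Verf\"urth's bubble-function technique on $K$ and on the edge patches for the efficiency bound \eqref{BeBa_eq:lower_bound_1}. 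You also correctly isolate the weighted quasi-interpolation estimates, and the quasi-monotonicity hypothesis they require, as the crux of jump-robustness --- this is precisely \cite[Lemma~$2.8$]{BeBa_Bernardi2000_APostDisc} and \cite[Hypothesis~$2.7$]{BeBa_Bernardi2000_APostDisc} as flagged in Remark \ref{BeBa_rmk:primal_estimates}.

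There is, however, one concrete error in your final step: the claim that quasi-monotonicity can simply be \emph{verified} for the coefficients \eqref{BeBa_eq:coefficient} treated in this paper. It holds for the box coefficient (Example $1$), whose interface is a simple inclusion. But for the cross coefficient (Example $2$) the partition joins two diagonally opposite quadrants, so around the crossing vertex the coefficient alternates $P, P^{-1}, P, P^{-1}$ --- the four-quadrant checkerboard, which is the standard counterexample where quasi-monotonicity fails: at that vertex no monotone path through the patch to the highest-coefficient element exists. Consequently the weighted interpolation estimates are unavailable there, and $C_1$, $C_2$ cannot be made independent of the jump; they degrade with the ratio $\max_{x \in \calD} a(x)/\min_{x \in \calD} a(x) = P^2$. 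This is exactly the caveat the paper itself records in Remark \ref{BeBa_rmk:primal_estimates}, citing \cite[Remark~$2.10$]{BeBa_Bernardi2000_APostDisc}: when the monotonicity hypothesis is violated, the bounds \eqref{BeBa_eq:upper_bound_1} and \eqref{BeBa_eq:lower_bound_1} still hold, but with jump-dependent constants. So your argument does establish the two inequalities of the theorem in general, and jump-robust constants for quasi-monotone configurations; what fails is the blanket robustness verification for both of the paper's examples, and you should weaken that last paragraph accordingly.
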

\begin{remark}
\label{BeBa_rmk:primal_estimates}
The novel idea in the proof of Theorem \ref{BeBa_thm:energynorm-estimator-primal} is a quasi-interpolation operator \cite{BeBa_Clement1975_Interpolation}, which allows for estimates on the interpolation error and multiplicative constants $C_1,C_2$ independent of the ratio $\max_{x\in\calD}{a(x)}/\min_{x\in\calD}{a(x)} = P^2$, cf~\cite[Lemma~$2.8$]{BeBa_Bernardi2000_APostDisc}. It requires a monotonicity assumption on the coefficient values with respect to the subdomains created by the discontinuities, see \cite[Hypothesis~$2.7$]{BeBa_Bernardi2000_APostDisc}. If this monotonicity is violated, the constants $C_1$ and $C_2$ will depend on the ratio $P^2$, cf.~ \cite[Remark~$2.10$]{BeBa_Bernardi2000_APostDisc}, but the estimates \eqref{BeBa_eq:upper_bound_1} and \eqref{BeBa_eq:lower_bound_1} still hold.
Further, Theorem \ref{BeBa_thm:energynorm-estimator-primal} assumes, that the mesh is aligned with the discontinuities. This assumption is not feasible in the cases when the discontinuities are curves, or when there is no a-priori knowledge of the discontinuity positions.
In both these cases we use an approximation $a_K$ to $a$ in the formula for $\eta_K$ given in Equation \eqref{BeBa_eq:estimator_primal} that is piecewise constant on each element $K \in \calK_\ell$. 
\end{remark}

Considering an exact approximation of $f$ by $f_\ell$, we obtain the following computable a-posteriori error estimate for the approximation of the $H^1$-norm as our quantity of interest
\begin{equation}
 \big|\|u\|_{H^1(\calD)} - \|u_\ell\|_{H^1(\calD)}\big| \leq C_0 C_1 \left( \sum_{K \in \calK_\ell} \eta_K^2 \right)^\frac{1}{2},
 \label{BeBa_eq:final_estimator}
\end{equation}
by combining the inverse triangle inequality and the norm equivalence estimate from \eqref{BeBa_eq:error_representation} with Theorem \ref{BeBa_thm:energynorm-estimator-primal}. This is used to drive the adaptive mesh marking and refinement procedure (Algorithm \ref{BeBa_alg:marking_algorithm}) using the D\"orfler marking strategy first introduced in \cite{BeBa_Doerfler1996_AdaptiveAlgorithm}, analysed in an abstract framework in \cite{BeBa_Carstensen2014_AxiomsOfAdaptivity} and applied in, e.g., \cite{BeBa_Gantner2022_StableImplementation}, yielding optimal convergence rates of the a-posteriori error estimator and the FE approximation in the energy, respectively $H^1$-norm. 

\begin{algorithm}
\caption{Marking and refinement algorithm}
\label{BeBa_alg:marking_algorithm}
\begin{algorithmic}

\REQUIRE Marking parameter $0<\vartheta<1$, number of refinements $L$, initial mesh $\calK_0$.
 \FOR{$\ell = 0,...,L-1$}
    \STATE Compute discrete solution $u_{\ell} \in V_\ell$.
    \STATE Compute refinement indicator $\eta_K$ for all $K \in \calK_\ell$ via Equation \eqref{BeBa_eq:estimator_primal}.
    \STATE Determine set $\widetilde{\calK}_{\ell}$ of elements $K \in \calK_\ell$ with smallest cardinality such that
    \begin{equation*}
     \vartheta \sum_{K \in \calK_\ell} \eta_K^2 \leq \sum_{K \in \widetilde{\calK}_{\ell}} \eta_K^2.
    \end{equation*}
    \STATE Refine all elements of the set $\widetilde{\calK}_{\ell}$ to obtain the refined mesh $\calK_{\ell+1}$.
 \ENDFOR
 
 \end{algorithmic}
\end{algorithm}

\section{Numerical experiments}
\label{BeBa_sec:numerical_experiments}
All computations in the upcoming sections are done in Python on an Intel(R) Core(TM) i$7$-$4770$ CPU running at $3.4$ GHz with $4$ cores and $2$ threads per core. All finite element computations are implemented via FEniCS \cite{BeBa_Fenics2015} and the linear systems are solved with its integrated optimized direct solver. The spatial domain is $\calD:=[0,1]^2$ and we set $f\equiv 1$ in \eqref{BeBa_eq:PDE} for simplicity. 
\subsection{Samplewise convergence on standard uniform vs. adaptively refined meshes} 
\label{BeBa_subsec:samplewise_convergence_on_standard_uniform_vs_adaptively_refined_meshes}
In this section we present the advantage of an adaptive refinement procedure by investigating the convergence properties of a single sample for each of the coefficients given by Examples $1$ and $2$ from Section \ref{BeBa_sec:random_pde_model} with $P=300$ on standard uniform and adaptively refined meshes. 
We start with the same initial mesh $\calK_0$ of $N_0=249$ vertices. The standard uniform meshes of finer resolution are attained through uniform refinement of the previous mesh with a scaling of $1.5$ in each space dimension resulting in $N_\ell \approx 249 \cdot 1.5^{2\cdot \ell}$ vertices at resolution $\ell \in \N$. The adaptively refined meshes of finer resolution are attained through adaptive refinement of the previous mesh according to Algorithm \ref{BeBa_alg:marking_algorithm} with $\vartheta = 0.5$. 
%
In Figure \ref{BeBa_fig:standard_adaptive_sample_convergence} we observe a substantial improvement of the convergence rate with the adaptively refined meshes (blue lines) by doubling the rate of convergence in comparison to the standard mesh convergence (red lines) for the approximation of the $H^1$-norm of the solution, i.e. $| \|u\|_{H^1(\calD)} - \|u_h\|_{H^1(\calD)}|$, which is the weak $H^1$-error.
The a-posteriori error estimator (light blue lines) converges with optimal rate $0.5$ as it is an upper bound to the strong $H^1$-error, i.e. $\|u - u_\ell\|_{H^1(\calD)}$.

Illustrations of the meshes are given in Figure \ref{BeBa_fig:standard_adaptive_meshes}, where we visually observe that the discontinuities are not resolved well by the coarse meshes. A high resolution is necessary at the discontinuities in order to approximate the PDE solution accurately. Realizing this with standard uniform meshes leads to large linear systems, which are expensive to solve.
In comparison, the adaptively refined meshes demonstrate the effectiveness of the 
a-posteriori error estimator by refining at the discontinuities of the coefficient and the peaks in the solution. Thus, in comparison to the standard uniform meshes the dimension of the linear systems to solve is reduced greatly while still retaining a high resolution of the meshes at the discontinuities and peaks.
\begin{figure}[tbhp]
  \includegraphics[width=0.49\textwidth]{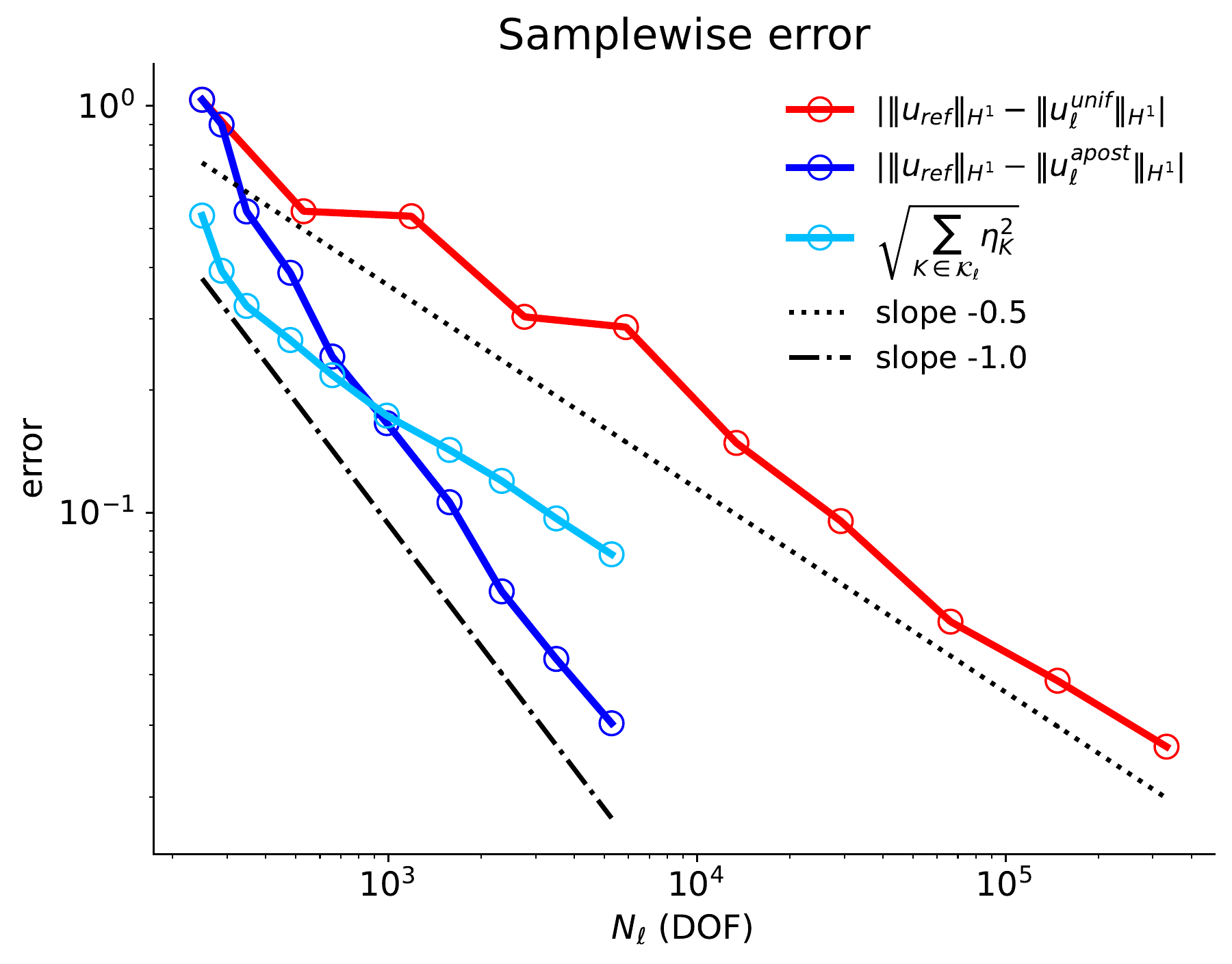}
 \includegraphics[width=0.49\textwidth]{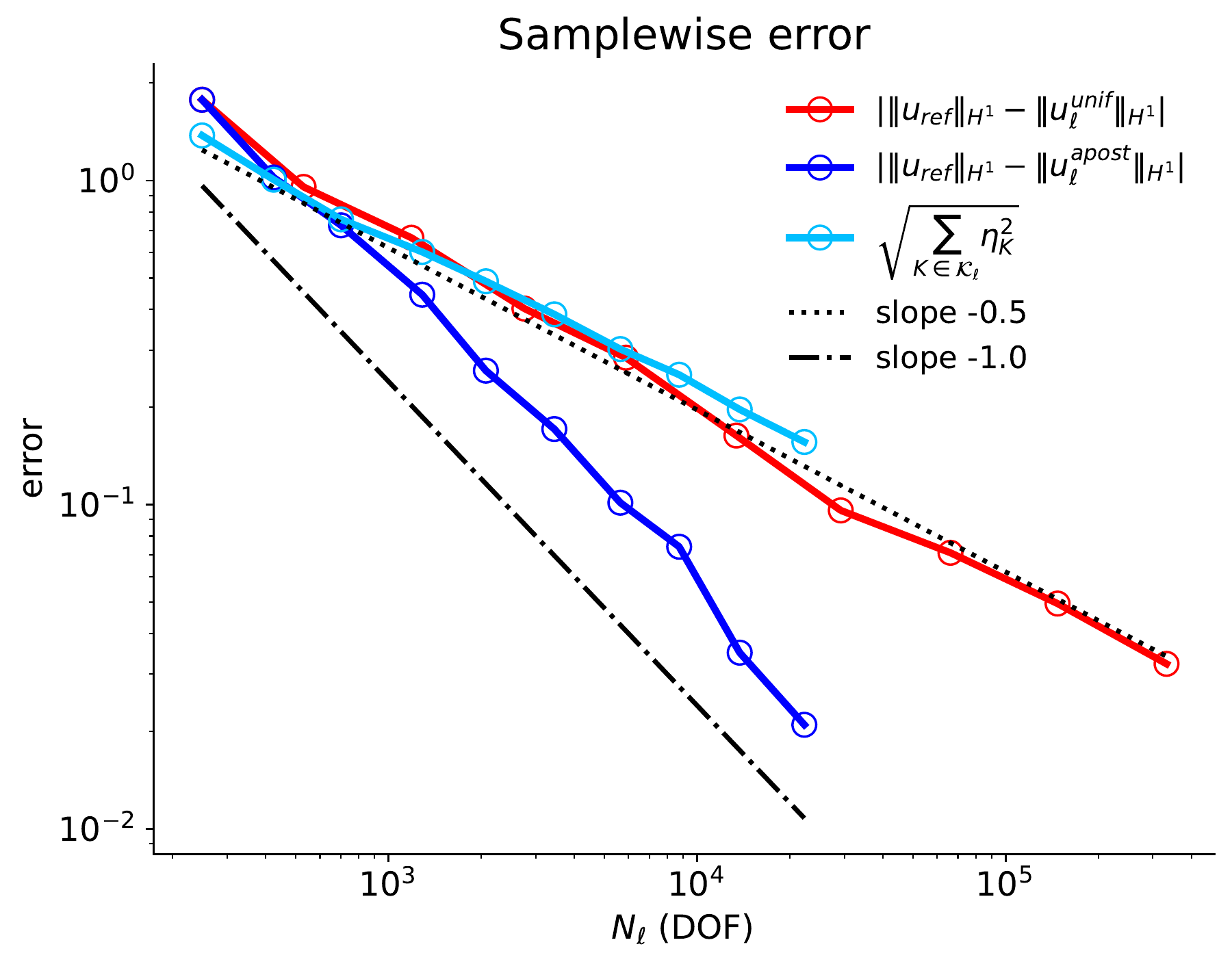}
  \caption{Convergence for box coefficient with $x=0.4,y=0.6,l=0.3$ and $P=300$ (left) and cross coefficient with $x=0.4,y=0.6$ and $P=300$ (right) on standard uniform vs. adaptively refined meshes meshes, refined with $\vartheta=0.5$. }
 \label{BeBa_fig:_sample_convergence}
 \label{BeBa_fig:standard_adaptive_sample_convergence}
\end{figure}
\begin{figure}[tbhp]
  \includegraphics[width=0.99\textwidth]{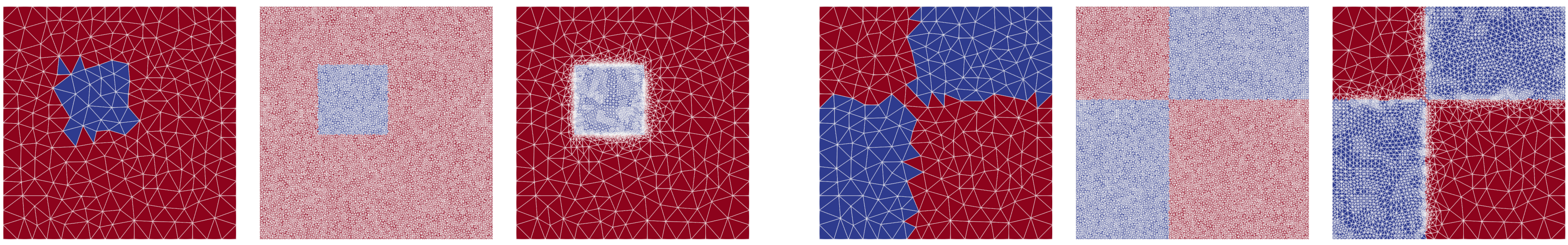}
 \caption{Left: Sample of box coefficient at resolutions $N_0=249$ and $N_5=13414$, $N_8=3498$. Right: Sample of cross coefficient at resolutions $N_0=249$ and $N_5=13414$, $N_5=3448$.}
 \label{BeBa_fig:standard_adaptive_meshes}
\end{figure}
\begin{remark}
\label{BeBa_rmk:adapted_reference}
 The reference solution for both examples has been computed on a mesh of resolution $N_{11} \approx 1850000$ and mesh elements aligned with the spatial discontinuities in the coefficient to achieve a high accuracy, cf.~\cite{BeBa_Barth2018_Elliptic}.
\end{remark}
\subsection{MLMC parameter estimates}
\label{BeBa_subsec:MLMC_parameter_estimates}
For the model problem considered in this work the constants $\alpha_M, \beta_M, \gamma_M, c_1,c_2,c_3$ from Theorem \ref{BeBa_thm:MLMC-complexity} are not available theoretically and need to be estimated numerically. Therefore, we apply the logarithm to base $s$ to Equations \eqref{BeBa_eq:MLMC_meandecay}, \eqref{BeBa_eq:MLMC_variancedecay} and \eqref{BeBa_eq:MLMC_costincrease} to obtain the linear relationships
\begin{equation}
\begin{aligned}
    \log_s(\bbE [Q_\ell - Q_{\ell-1}]) &\leq \tilde{c}_1 -\alpha_M \ell, \\ 
    \log_s(\bbV [Q_\ell - Q_{\ell-1}]) &\leq  \tilde{c}_2 -\beta_M \ell, \\
    \log_s(\calC[Q_\ell - Q_{\ell-1}]) &\leq  \tilde{c}_3 + \gamma_M \ell,
\end{aligned}
\end{equation}
where $\tilde{c}_i = \log_s(c_i)$ for $i=1,2,3$. On each level $\ell=1,...,L$, $M \in \N$ samples of the difference quantity  $Q_\ell^{(k)} - Q_{\ell-1}^{(k)}$ for $k=1,...,M$, are computed. The mean, variance and cost quantities are estimated by sample averages and are then used to compute the desired parameters and constants from the linear relationships by linear fitting.
\begin{figure}[tbhp]
 \includegraphics[width=0.49\textwidth]{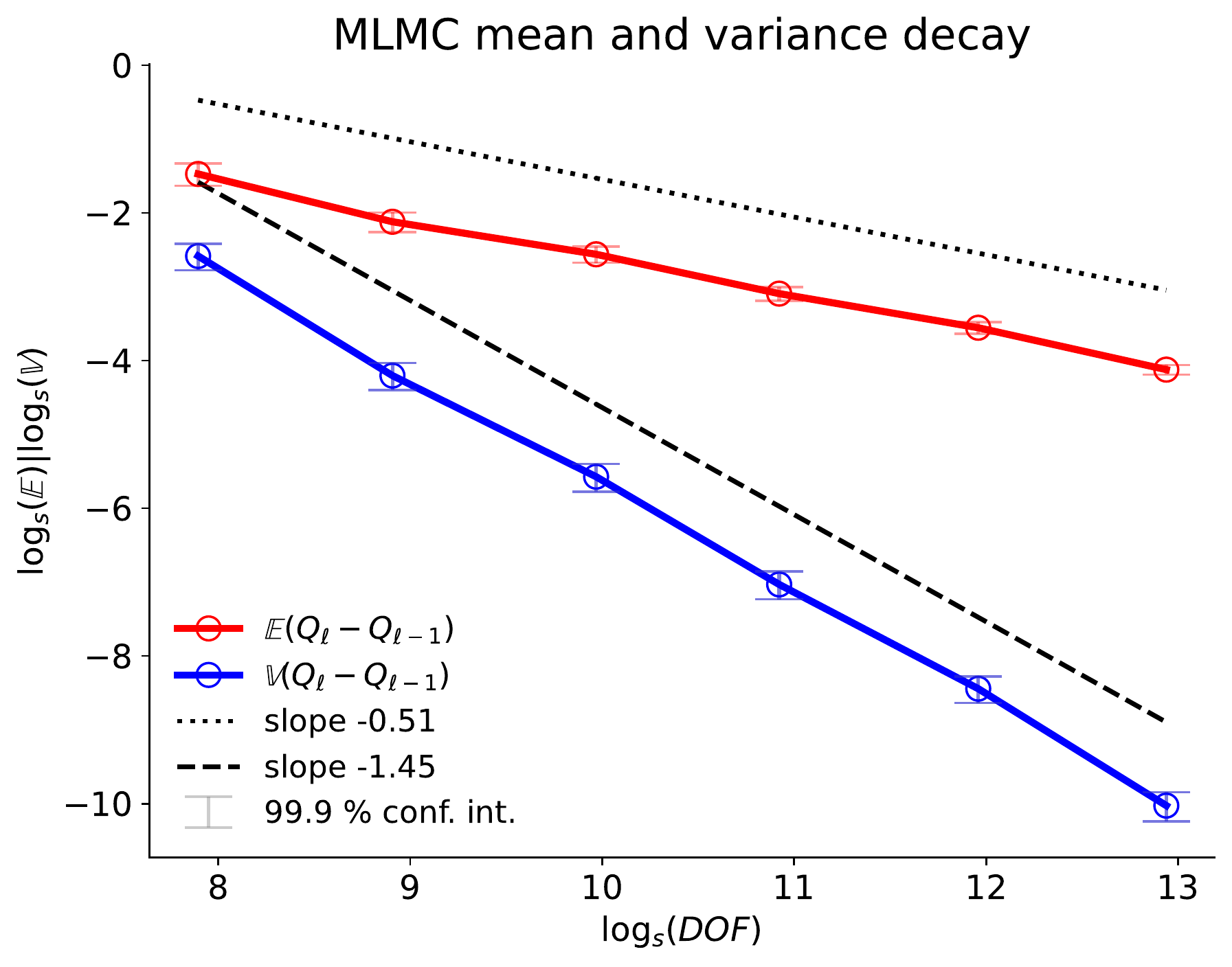}
 \includegraphics[width=0.49\textwidth]{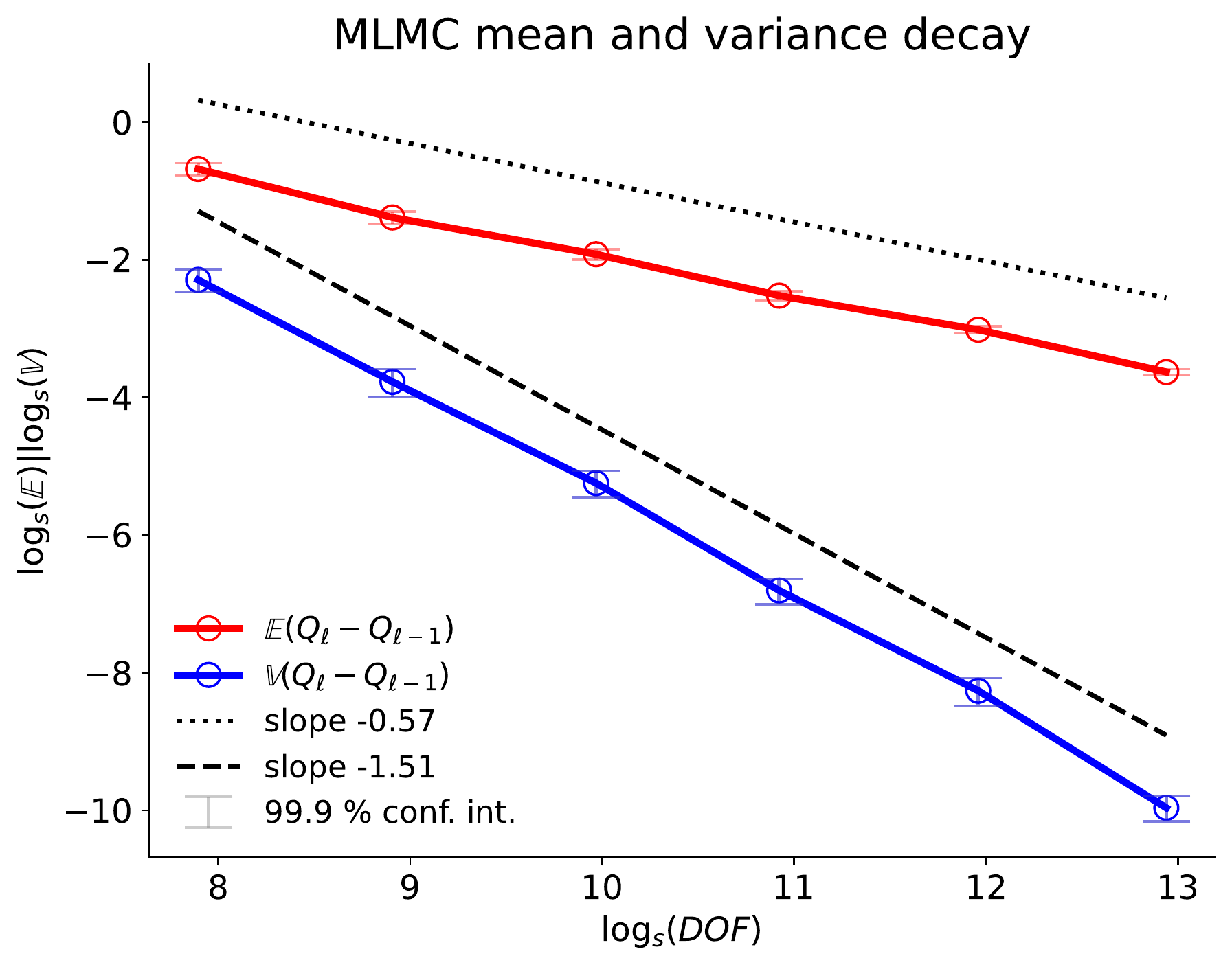}
 \caption{MLMC parameter estimates with $L=6$ and $M=1000$: box coefficient (left) and cross coefficient (right) for $P=300$. Simulations with $P=1000$ yielded about the same convergence rates, but with upward shifted curves.}
 \label{BeBa_fig:MLMC_parameter_estimates_box}
\end{figure}
\subsection{CLMC parameter estimates}
\label{BeBa_subsec:CLMC_parameter_estimates}
As for MLMC, the constants $\alpha_C, \beta_C, \gamma_C, c_4,c_5,c_6$ from Theorem \ref{BeBa_thm:CLMC-complexity} are not available theoretically and need to be estimated numerically. We apply the natural logarithm to Equations \eqref{BeBa_eq:CLMC_meandecay}, \eqref{BeBa_eq:CLMC_variancedecay} and \eqref{BeBa_eq:CLMC_costincrease} to obtain the linear relationships
\begin{equation}
\begin{aligned}
    \text{ln}(\bbE [\scalebox{0.8}{$\frac{\textup{d}Q}{\textup{d}\ell}$}]) &\leq \tilde{c}_4 -\alpha_C \ell, \\ 
    \text{ln}(\bbV [\scalebox{0.8}{$\frac{\textup{d}Q}{\textup{d}\ell}$}]) &\leq \tilde{c}_5 -\beta_C \ell, \\
    \text{ln}(\calC[\ell]) &\leq  \tilde{c}_6 + \gamma_C \ell,
\end{aligned}
\end{equation}
where $\tilde{c}_i = \text{ln}(c_i)$ for $i=4,5,6$. $M \in \N$ samples $k=1,...,M$ of the quotient $\bigg(\frac{\textup{d}Q}{\textup{d}\ell}\bigg)^{(k)} (\ell) := \frac{Q_j^{(k)} - Q_{j-1}^{(k)}}{\ell_{j}^{(k)} - \ell_{j-1}^{(k)}}$ are computed for $\ell \in (\ell_{j-1}^{(k)} ,\ell_{j}^{(k)})$ and $j=1,...,J$. Since each sample $k$ provides different values for the levels $\ell_j^{(k)}$ for $j\geq1$ we numerically interpolate the computed quotient onto a common level domain $[\max_k{\ell_1^{(k)}}, \min_k{\ell_J^{(k)}}]$. The mean, variance and cost quantities are estimated on this common level domain by sample averages and are then used to compute the desired parameters and constants from the linear relationships by linear fitting.
\begin{figure}[tbhp]
 \includegraphics[width=0.49\textwidth]{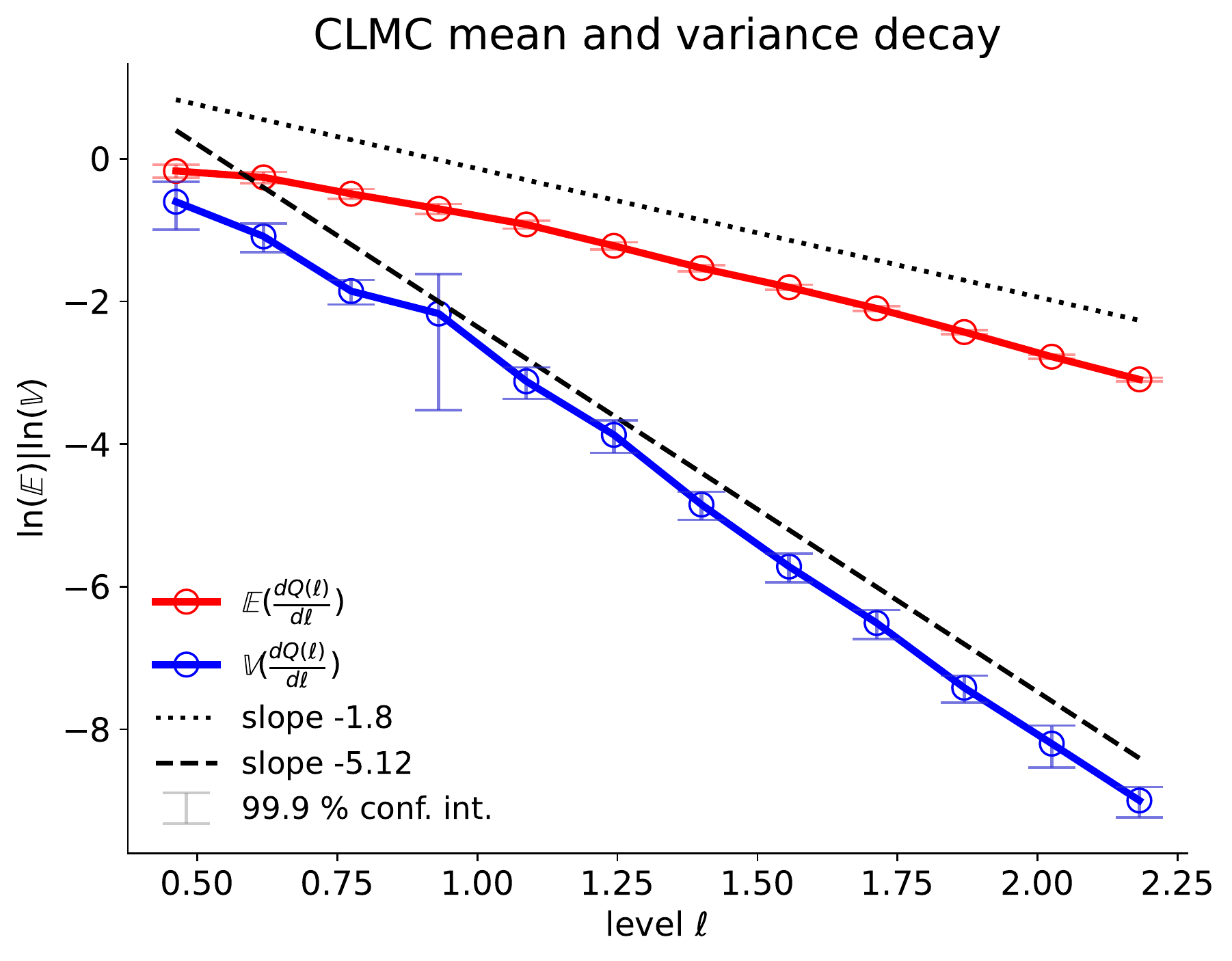}
\includegraphics[width=0.49\textwidth]{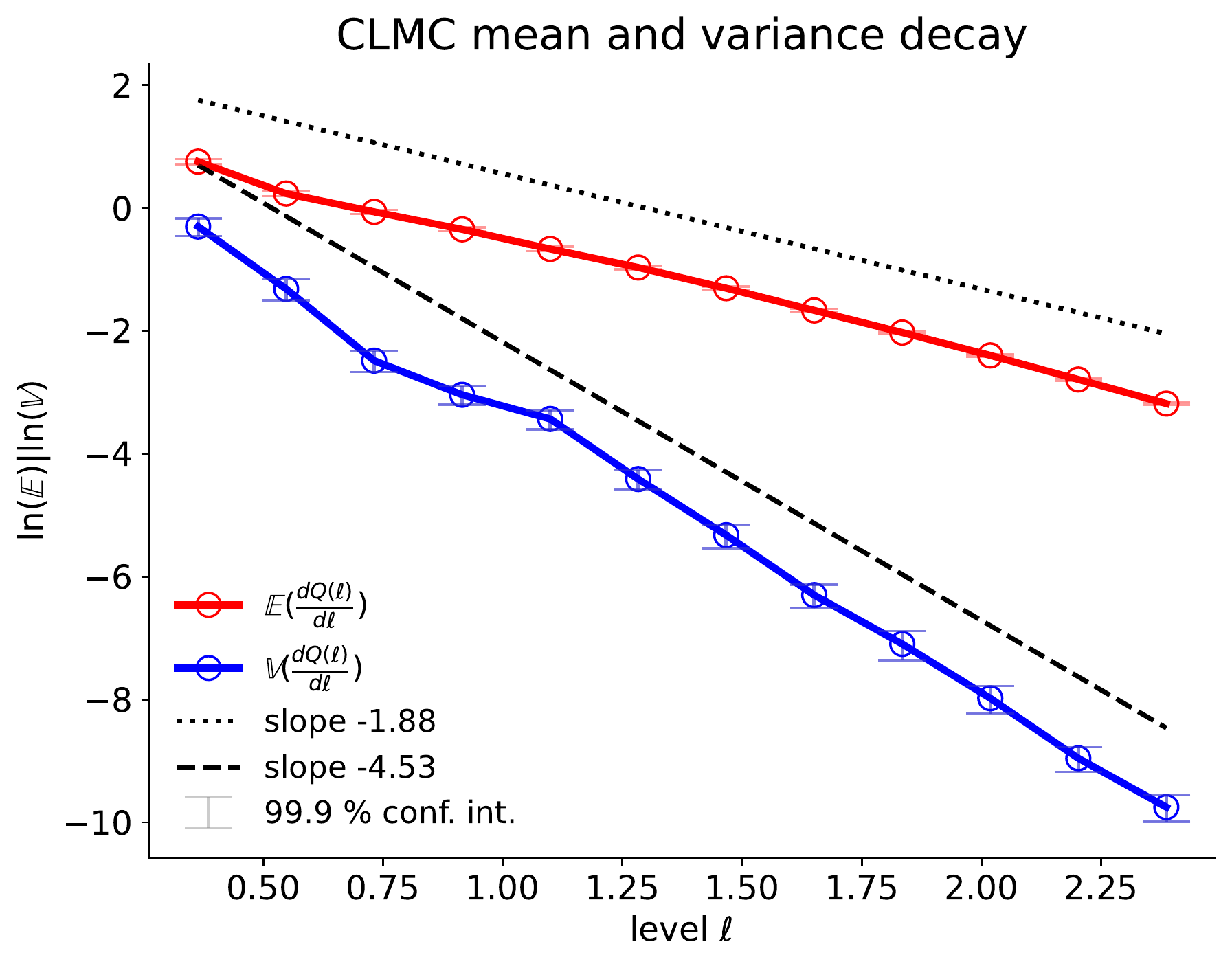}
 \caption{CLMC parameter estimates for $J=12$ and $M=1000$: box coefficient (left) and cross coefficient (right) for $P=300$. Simulations with $P=1000$ yielded about the same convergence rates, but with upward shifted curves.}
 \label{BeBa_fig:CLMC_parameter_estimates_box}
\end{figure}
\subsection{Pseudo-random vs. quasi-random numbers}
\label{BeBa_subsec:pseudo-random_quasi-random_numbers}
In this section we want to demonstrate the superiority of quasi-random numbers over pseudo-random numbers in approximating the one-dimensional exponential distribution with moderate sample sizes. 
A convergence experiment is visualized in Figure \ref{BeBa_fig:quasi_random_convergence}, where $\bbE_{exact}$ and $\bbV_{exact}$ are known values and $\bbE_{approx}$ and $\bbV_{approx}$ are computed using the indicated number of samples on the $x$-axis. The MSE on the $y$-axis is estimated over $20$ independent runs. We observe a much faster MSE decay in approximating the mean (right) and variance (left) of the exponential distribution with quasi-random Sobol numbers in comparison to pseudo-random numbers. For a visual comparison of the distributional properties of $2^{13}=8192$ samples of quasi-random Sobol numbers and pseudo-random numbers, see Figure \ref{BeBa_fig:quasi_random_comparison}. The quasi-random Sobol numbers yield visually very accurate approximations of the exponential distribution density function, whereas deviations are clearly visible for the pseudo-random numbers.
\begin{figure}[tbhp]
 \includegraphics[width=0.49\textwidth]{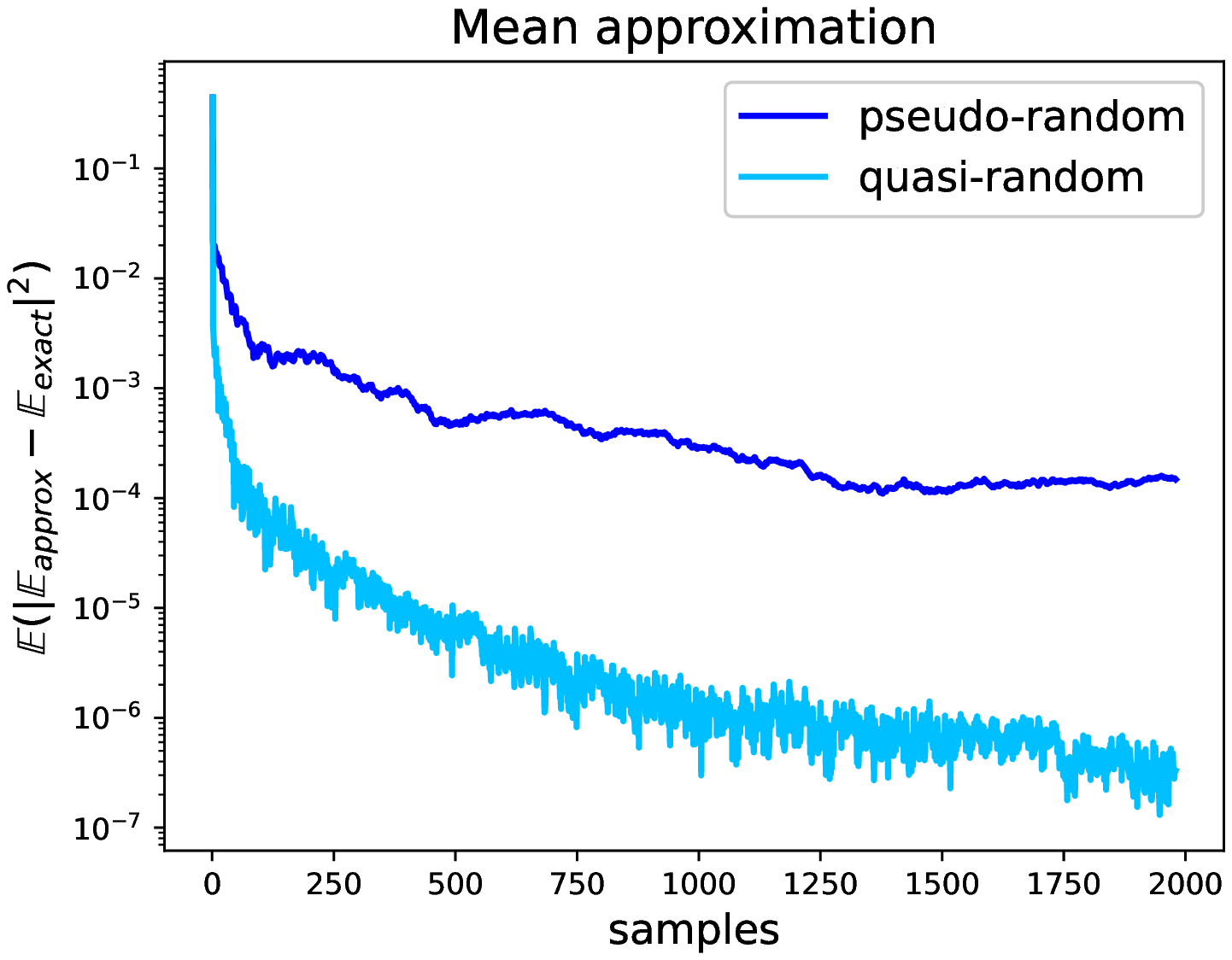}
 \includegraphics[width=0.49\textwidth]{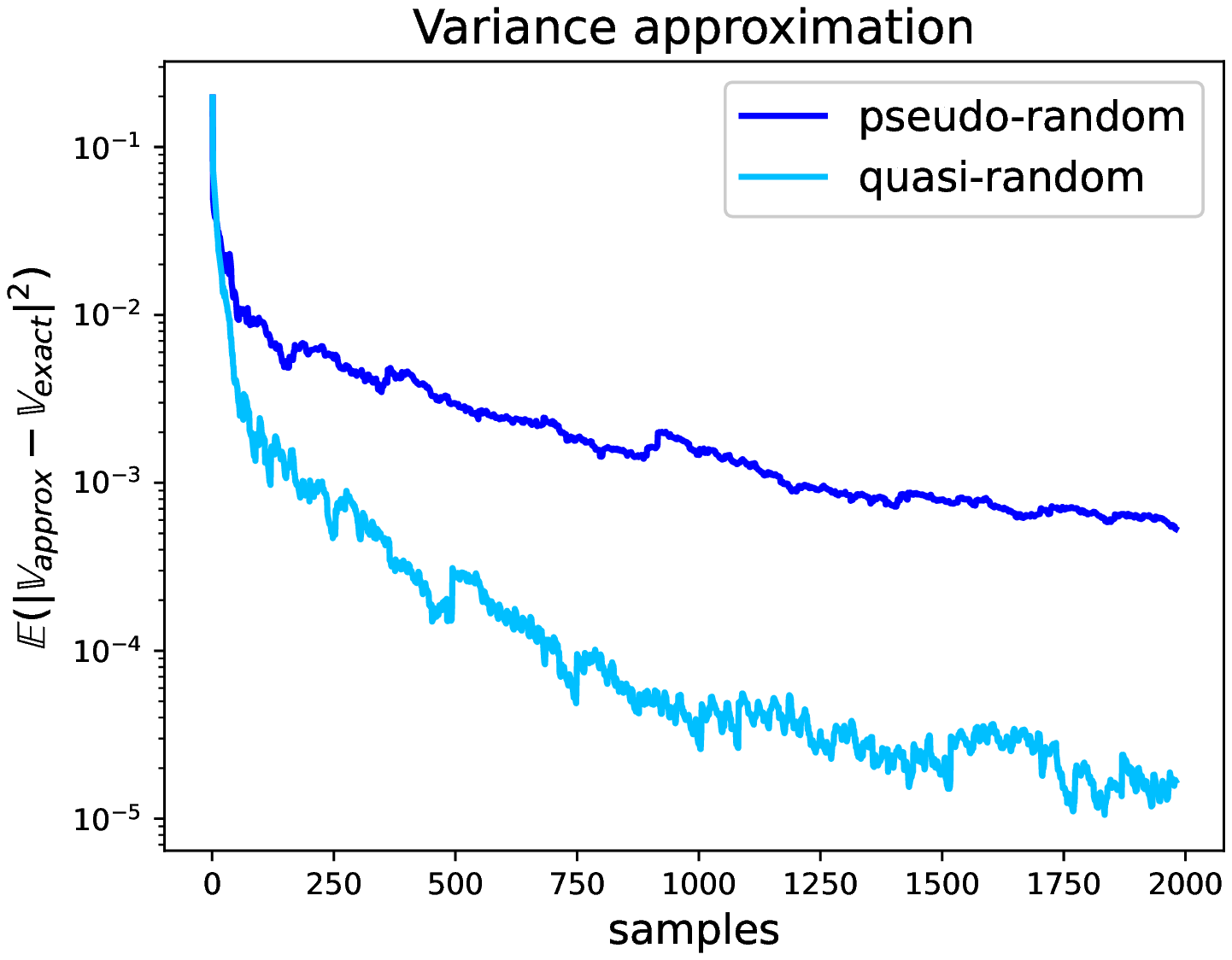}
 \caption{MSE for the approximation of the exact values $\bbE_{exact}(L_{1.5}) = \frac{2}{3}$ and $\bbV_{exact}(L_{1.5}) = \frac{4}{9}$ for $L_{1.5} \overset{d}{=} \text{Exp}(1.5)$.}
 \label{BeBa_fig:quasi_random_convergence}
\end{figure}
\begin{figure}[tbhp]
 \includegraphics[width=0.49\textwidth]{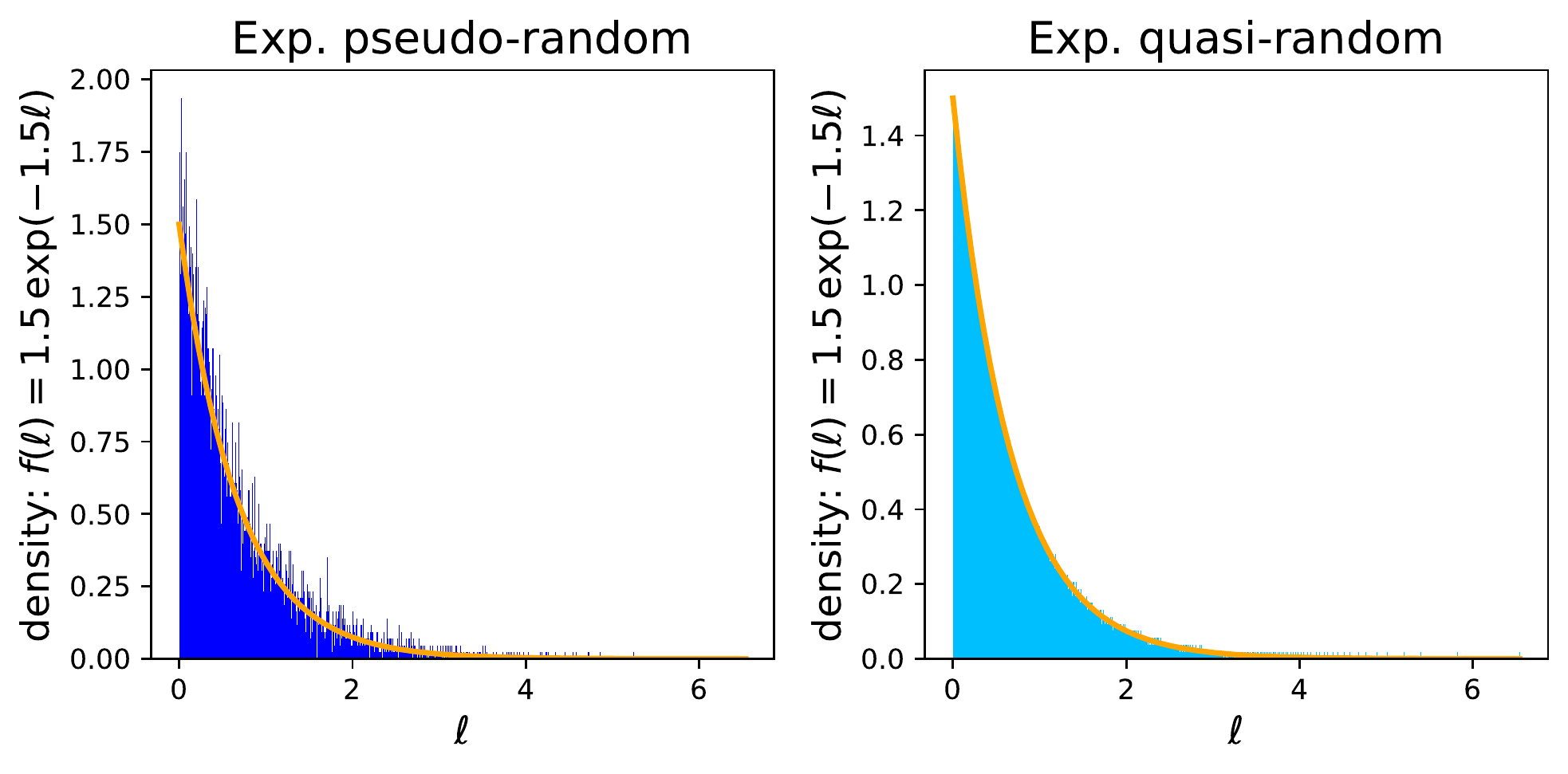}
 \includegraphics[width=0.49\textwidth]{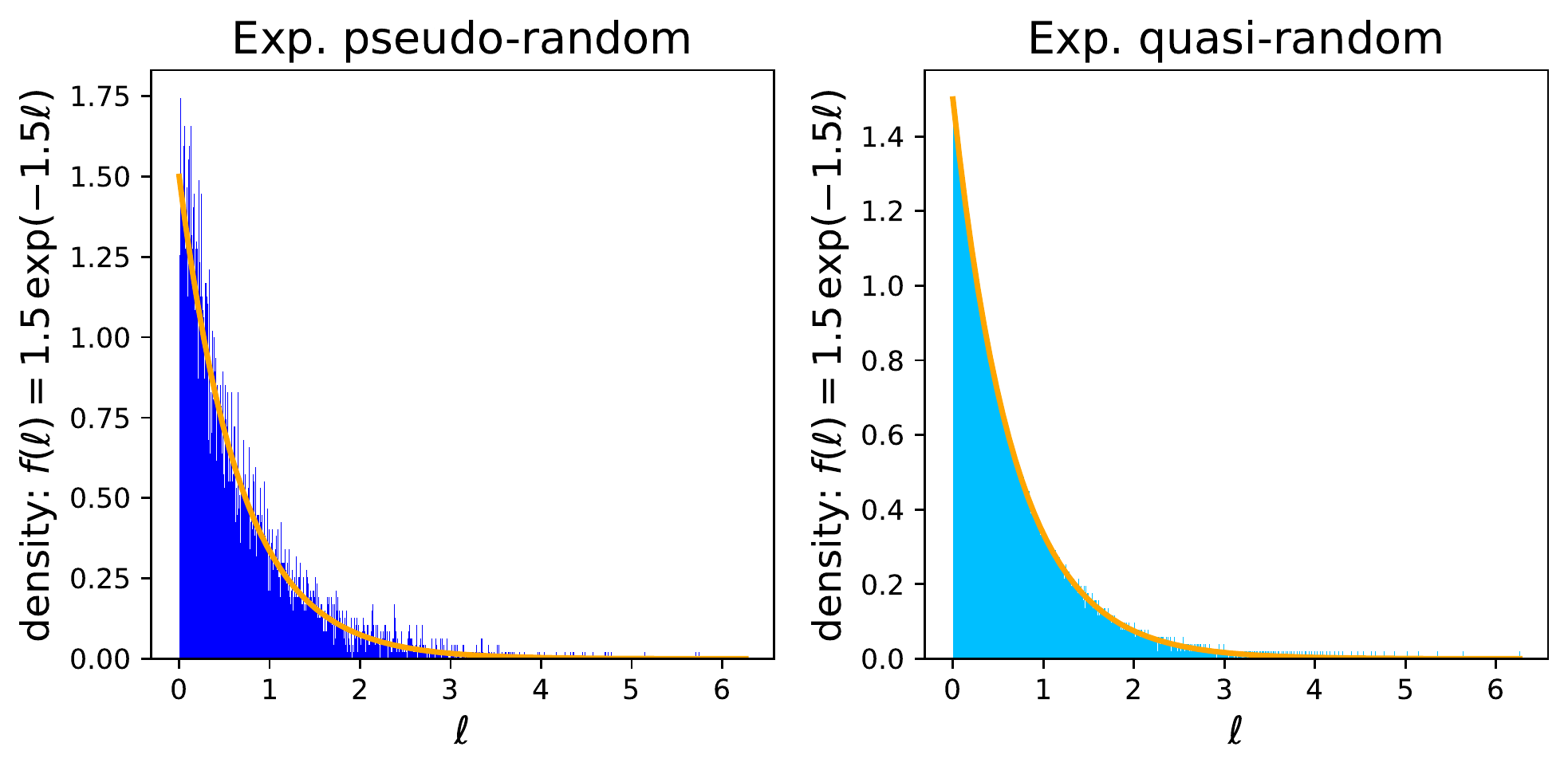}
 \caption{Comparison of quasi-random Sobol numbers and pseudo-random numbers for two different random seeds using $2^{13}=8192$ samples.}
 \label{BeBa_fig:quasi_random_comparison}
\end{figure}
\subsection{Performance of MLMC, CMLC and QCMLC}
\label{BeBa_subsec:performance_of_mlmc_clmc_qclmc}
In this subsection we conduct numerical experiments comparing the three introduced stochastic simulation methods multilevel Monte Carlo (MLMC), continuous level Monte Carlo (CLMC) and our variant quasi continuous level Monte Carlo (QCLMC) in their respective time to error performance. To obtain the standard uniform meshes for MLMC and the adaptive meshes for the CLMC methods, we proceed as outlined in Section \ref{BeBa_subsec:samplewise_convergence_on_standard_uniform_vs_adaptively_refined_meshes}.

We optimize the MLMC method in terms of time to error performance as proposed in Section \ref{BeBa_subsec:optimal_mse_weight} by computing an optimal weighting between the mean squared error contributions. The CLMC method is optimized in terms of time to error performance as proposed in Section \ref{BeBa_subsec:optimal_exponential_distribution_parameter} by computing an optimal exponential distribution parameter. The QCLMC method uses the same exponential distribution parameter as the CLMC method for a better comparability of the methods. 
We simulate the approximation of $\bbE[\calQ(u) - Q_0]$, where $\calQ(u):=\|u\|_{H_0^1(\calD)}$ with $u$ as the solution to Equation \eqref{BeBa_eq:weak-form} and $Q_0 := \calQ(u_0)$ an approximation involving the numerical solution $u_0$ to Equation \eqref{BeBa_eq:weak-form-FEM} on the coarsest mesh $\calK_0$.
\subsubsection{Reference solution}
\label{BeBa_subsec:reference_solution}
We do not have access to the exact value of $\bbE[\calQ(u) - Q_0]$ and approximate it to a very high accuracy.
We use MLMC with meshes aligned with the spatial discontinuities, cf.~\cite{BeBa_Barth2018_Elliptic} for details, to estimate $\bbE[\calQ]\approx \widehat{Q}_{L}^{\text{MLMC}_{adp}}$.
Then, we subtract an independent (non-adapted) MC estimator \eqref{BeBa_eq:montecarlo} for $\bbE[Q_0]\approx \widehat{Q}_0^\text{MC}$, which is unbiased.
We choose tolerances of $0.0025^2$ for the different terms in the MSE expansion and compute
\begin{equation*}
    \begin{aligned}
        \varepsilon_{ref}^2 = & \ \bbE\Big[\big(\bbE[\calQ - Q_0] - (\widehat{Q}_{L}^{\text{MLMC}_{adp}} - \widehat{Q}_0^\text{MC})\big)^2\Big] \\
        = & \ \bbV\Big[\widehat{Q}_{L}^{\text{MLMC}_{adp}}\Big] + \bbE\big[\calQ - Q_L\big]^2 + \bbV\Big[\widehat{Q}_0^\text{MC}\Big] \\
        \leq & \ 0.0025^2 + 0.0025^2 + 0.0025^2 = 3 \cdot 0.0025^2,
    \end{aligned}
\end{equation*}
for the total MSE of the reference solution.
\subsubsection{Method comparison}
\label{BeBa_subsec:method_comparison}
For each simulation method and each of the two coefficient examples from Section \ref{BeBa_sec:random_pde_model} we conducted $100$ independent runs, see Remark \ref{BeBa_rmk:independence}, on $7$ different MSE tolerances given by
\begin{equation}
 \varepsilon_i^2 = 0.04 \cdot 0.8^{2i}, \quad\hbox{ for }\quad i=0,...,6.
 \label{BeBa_eq:mse_tolerances}
\end{equation}
We remark that the MSE of the reference solution is more than $100$ times smaller.
Next, we estimate the real average MSE over $100$ independent runs by an MC estimate and compute confidence intervals via the central limit theorem for each estimation. 

In Figure \ref{BeBa_fig:convergence_and_solution_spread_box} (left) Example $1$ from Section \ref{BeBa_sec:random_pde_model} is simulated and we observe that all methods attain their expected time to MSE convergence rate of $1$. For $P=300$ MLMC performs a bit better than CLMC, but their $95\%$ confidence intervals are close to one another. QCLMC clearly outperforms both methods. For $P=1000$ MLMC and CLMC perform similarly, while QCLMC again outperforms both methods by a considerable margin. 
In Figure \ref{BeBa_fig:convergence_and_solution_spread_box} (right) we observe that the solutions to most of the simulation runs of QCLMC are much closer to the reference solution than the solutions to the runs of CLMC. This underlines the observed lower MSE in Figure \ref{BeBa_fig:convergence_and_solution_spread_box} of QCLMC.

In Figure \ref{BeBa_fig:convergence_and_solution_spread_cross} (left) Example $2$ from Section \ref{BeBa_sec:random_pde_model} is simulated and we observe that all methods attain their expected time to MSE convergence rate of $1$. For $P=300$ MLMC clearly outperforms CLMC, but is again outperformed by QCLMC. For $P=1000$ MLMC still outperforms CLMC, but less than before, while QCLMC again clearly outperforms both MLMC and CLMC. 
In Figure \ref{BeBa_fig:convergence_and_solution_spread_cross} (right) we observe as before that the QCLMC simulation runs yield consistently better results than the CLMC runs.
\begin{figure}[tbhp]
 \includegraphics[width=0.49\textwidth]{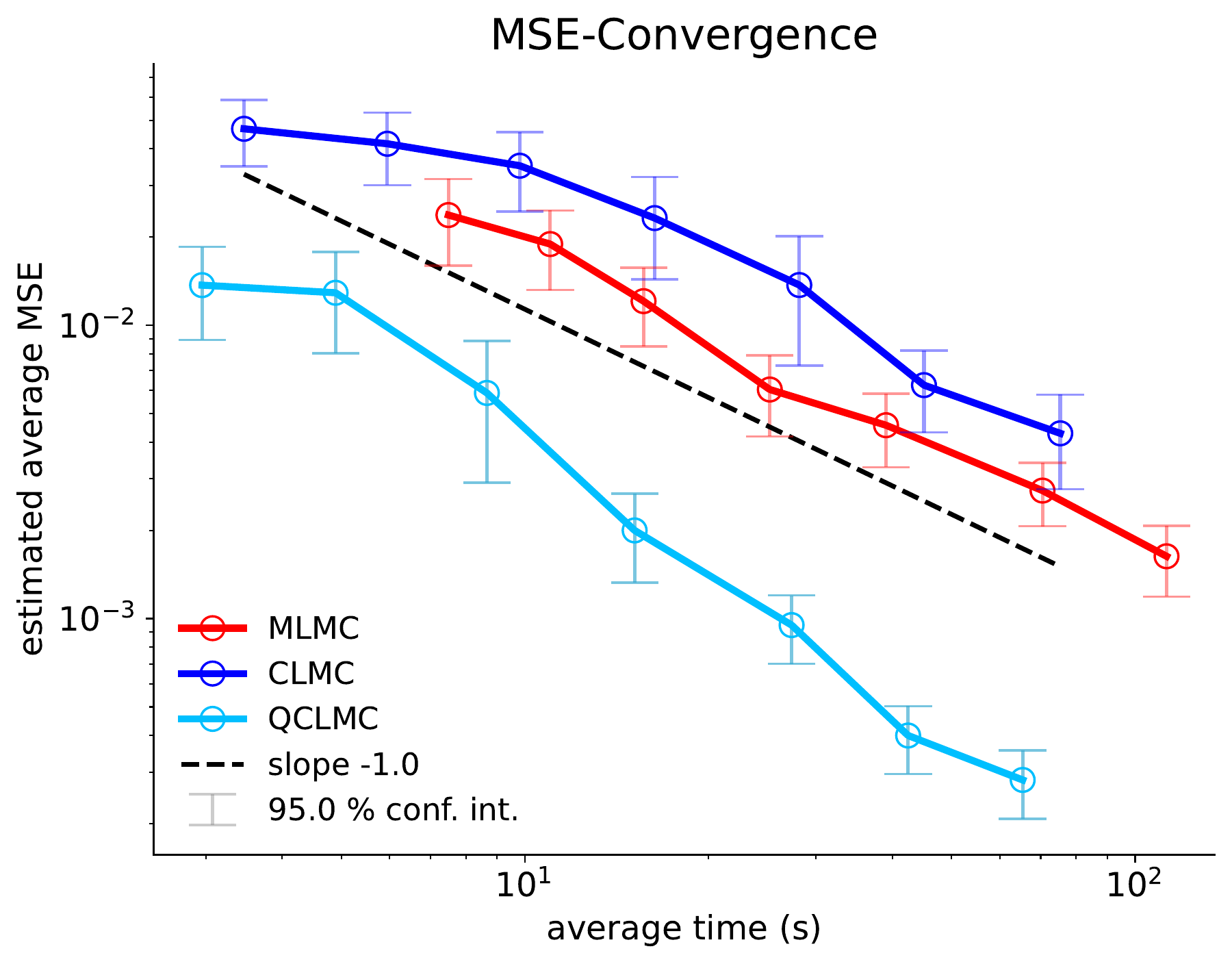}
  \includegraphics[width=0.49\textwidth]{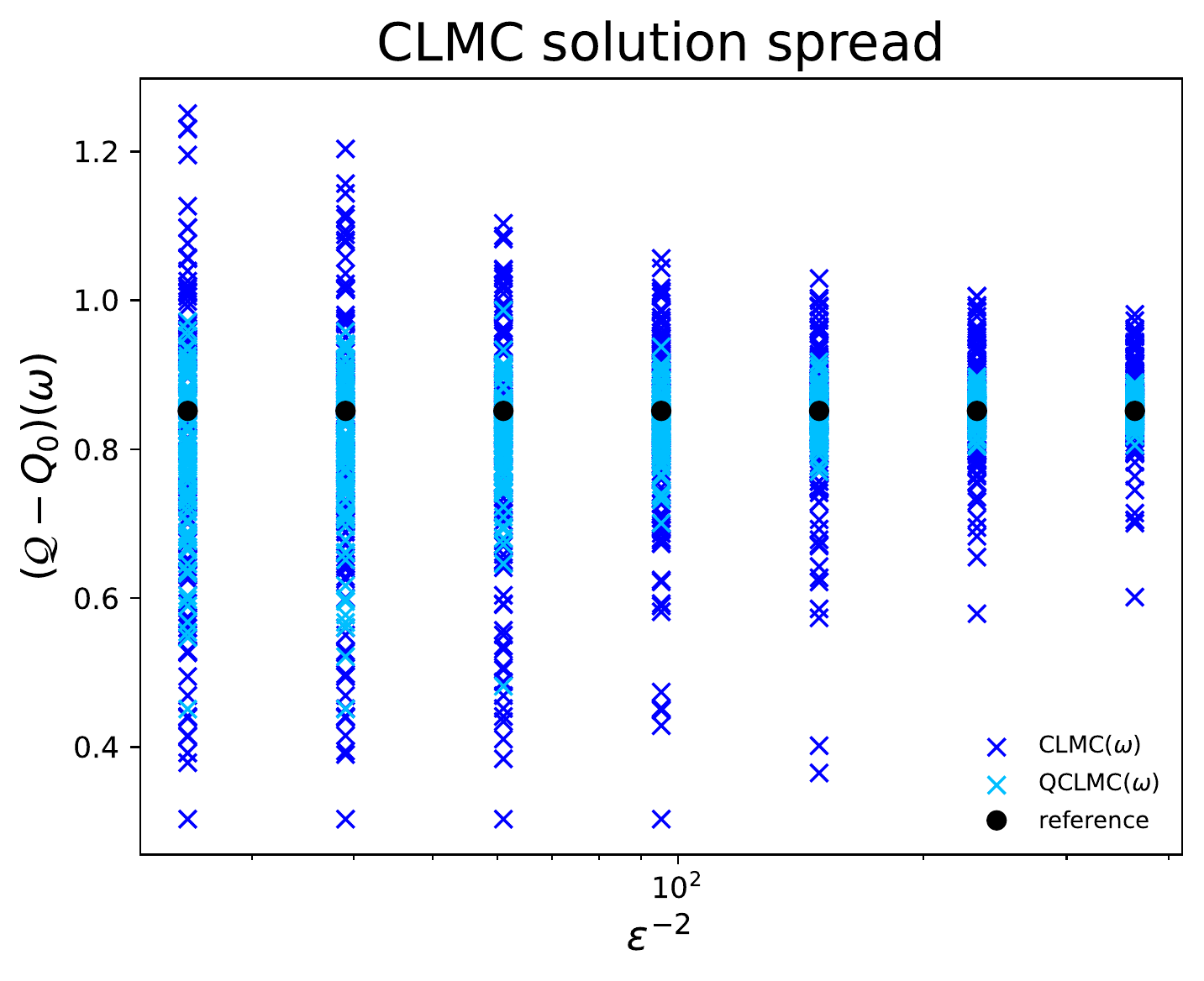}
   \includegraphics[width=0.49\textwidth]{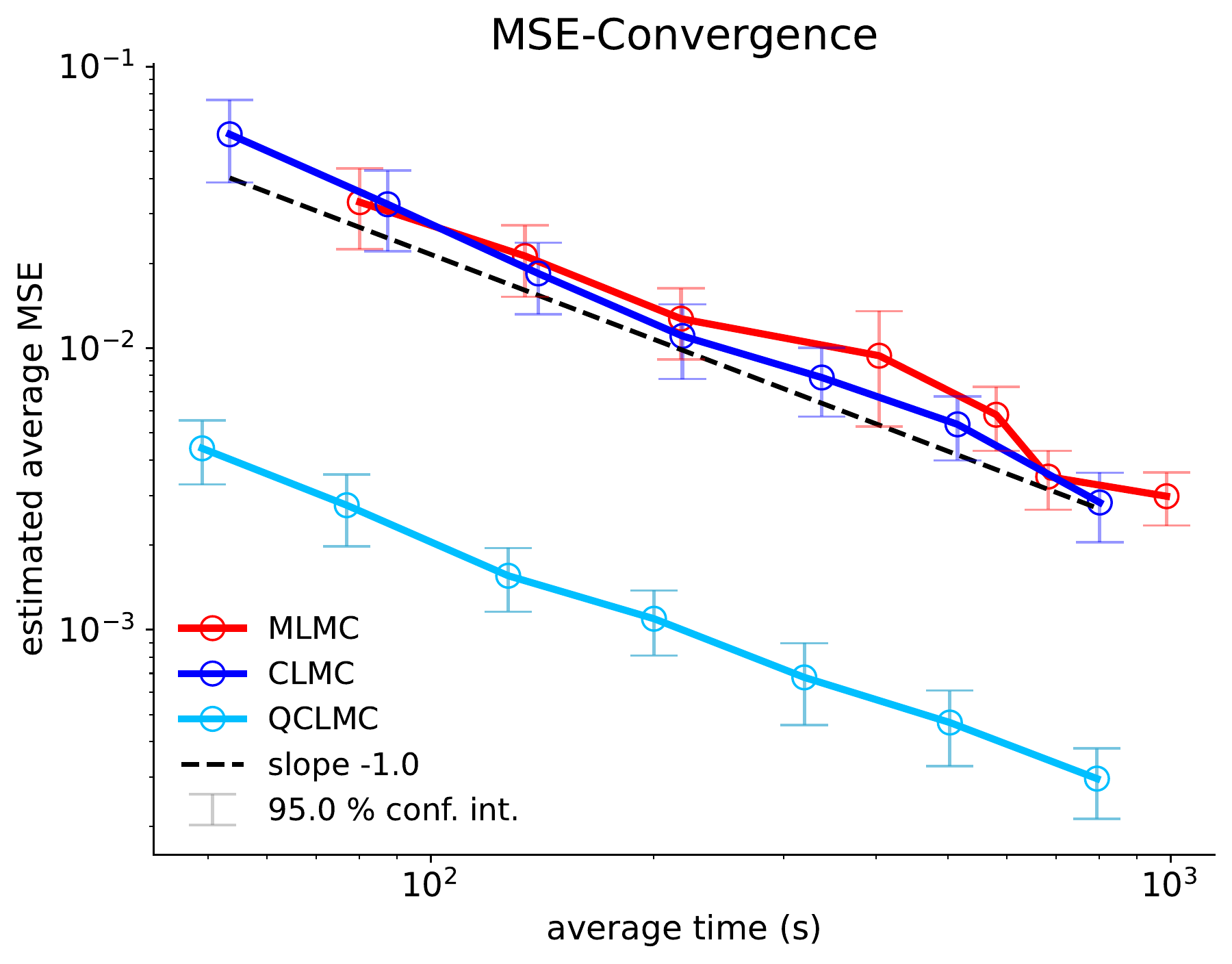}
  \includegraphics[width=0.49\textwidth]{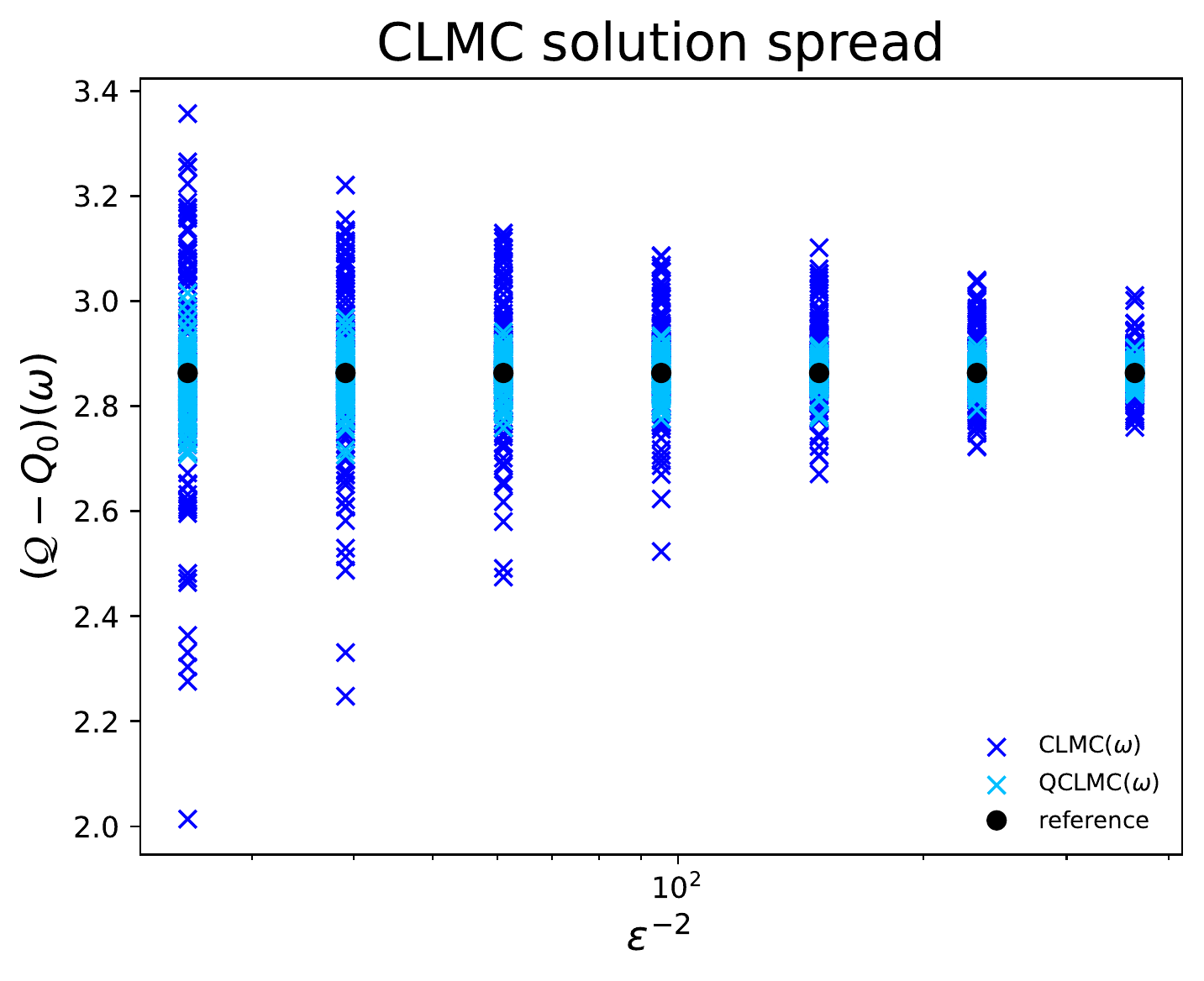}
 \caption{Left: The estimated average MSE (y-axis) plotted against time in seconds (x-axis) for the box coefficient, Example $1$ from Section \ref{BeBa_sec:random_pde_model}, with $P=300$ (top) and $P=1000$ (bottom). Right: The computed solutions (y-axis) for each of the $100$ runs plotted against the inverse value of the given MSE tolerances (x-axis) given in Equation \eqref{BeBa_eq:mse_tolerances}.}
 \label{BeBa_fig:convergence_and_solution_spread_box}
\end{figure}
\begin{figure}[tbhp]
 \includegraphics[width=0.49\textwidth]{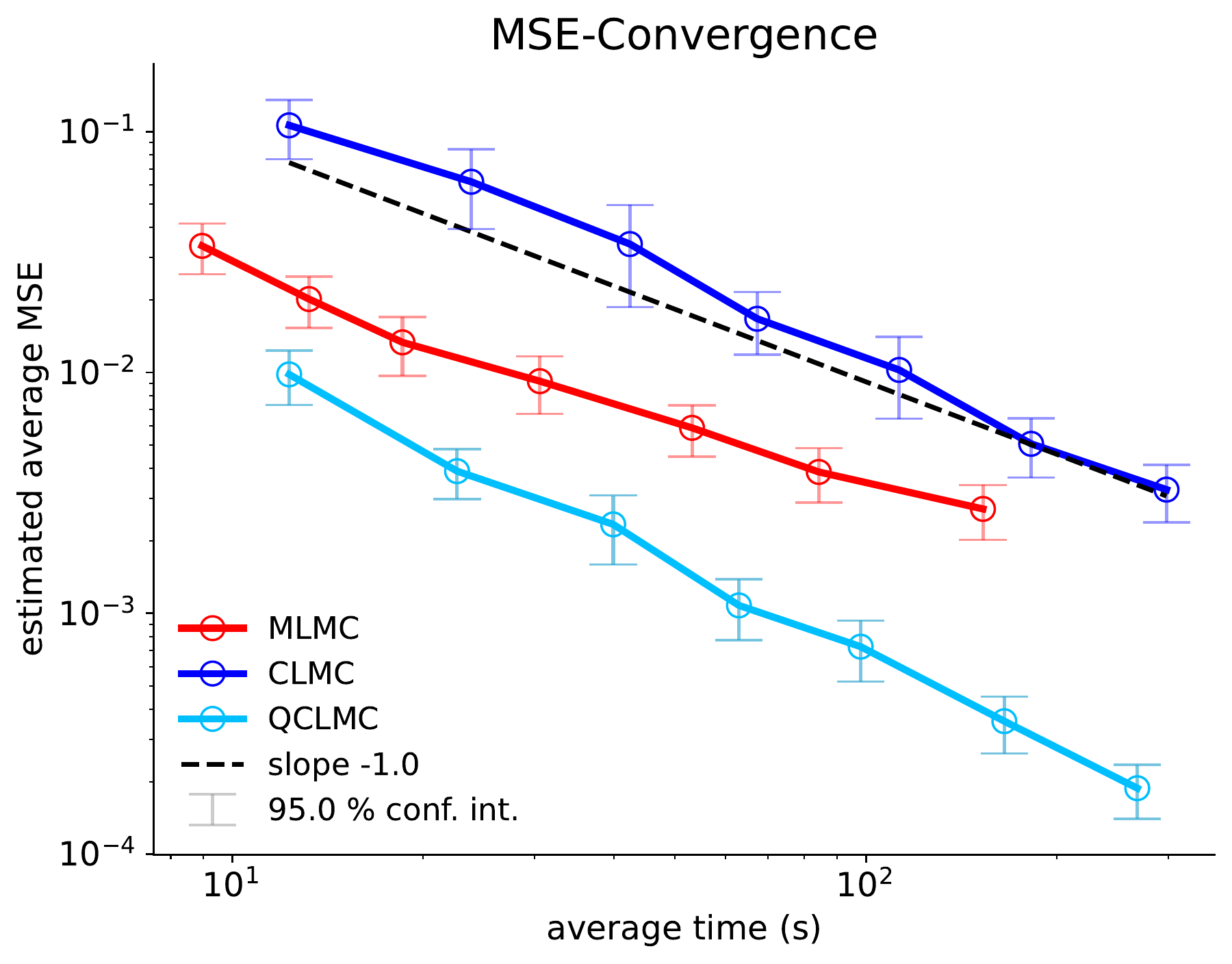}
 \includegraphics[width=0.49\textwidth]{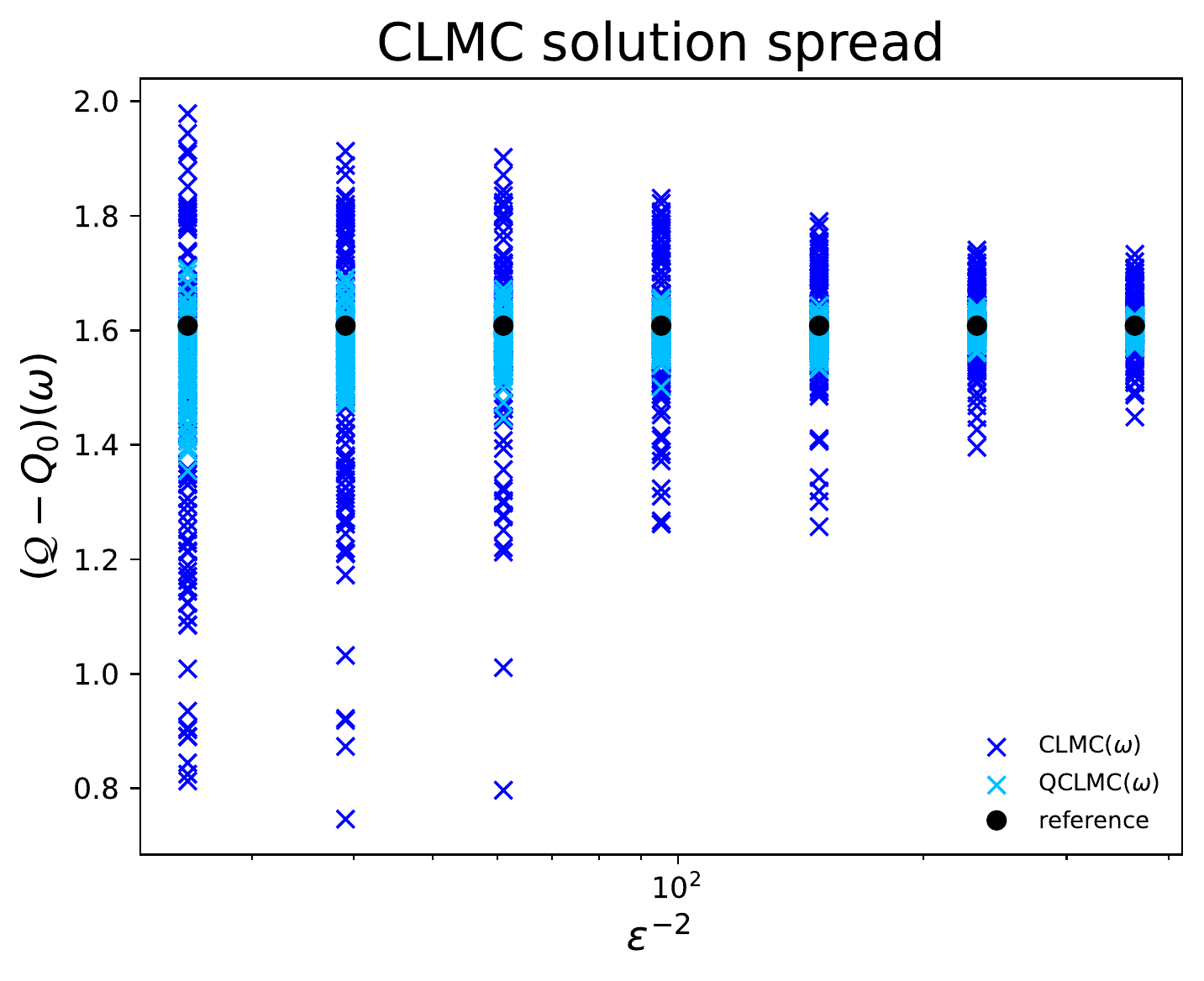}
 \includegraphics[width=0.49\textwidth]{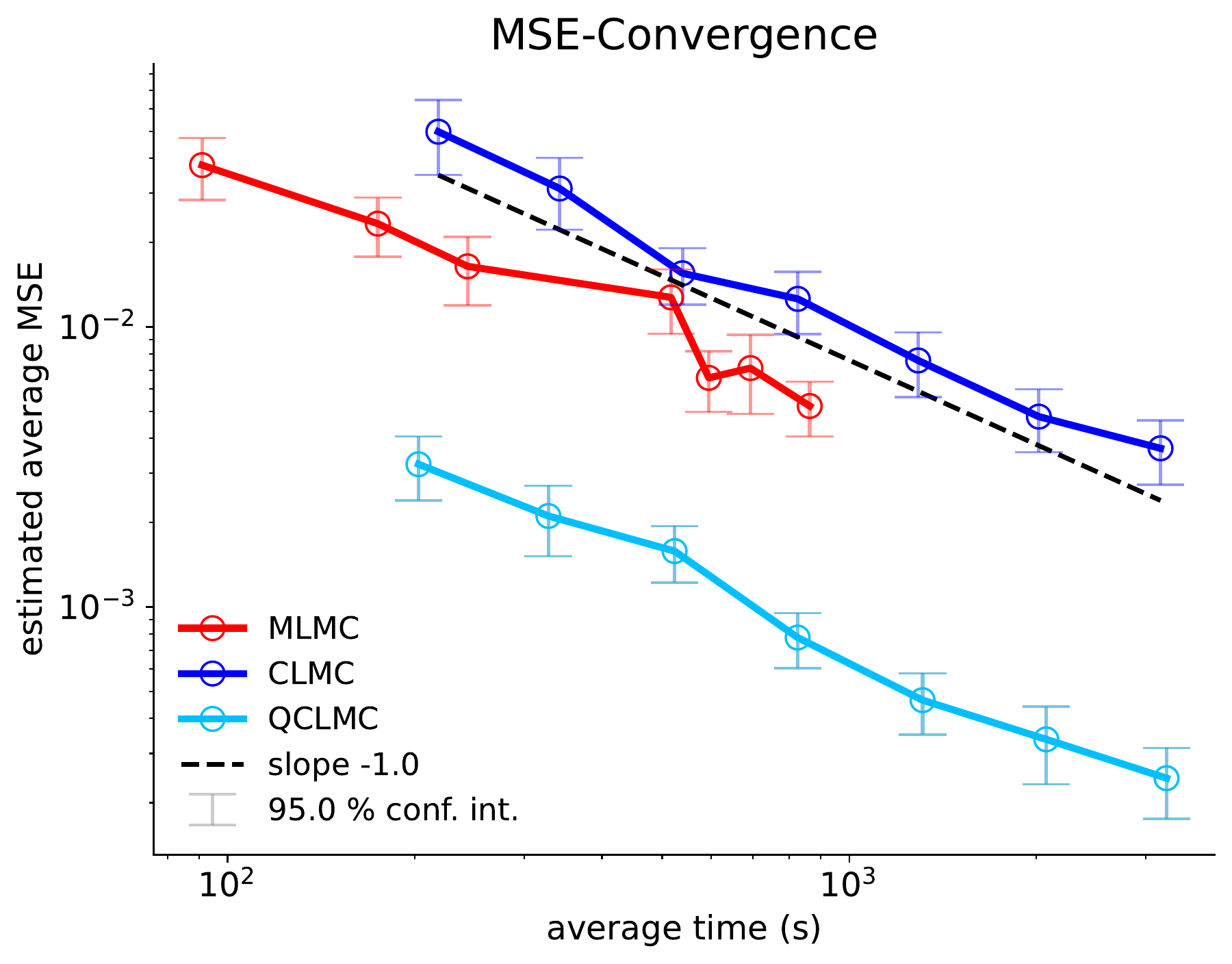}
 \includegraphics[width=0.49\textwidth]{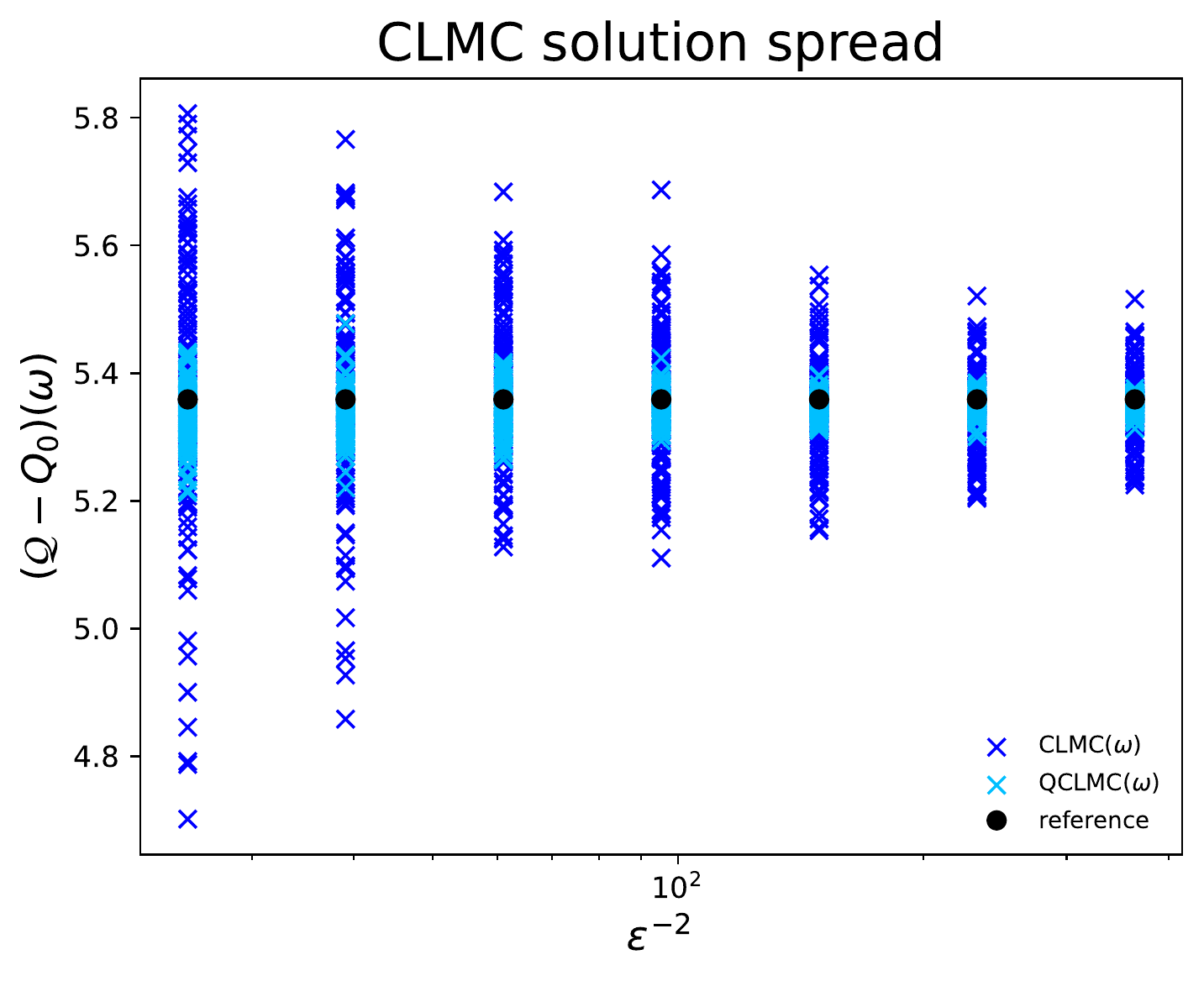}
 \caption{Left: The estimated average MSE (y-axis) plotted against time in seconds (x-axis) for the cross coefficient, Example $2$ from Section \ref{BeBa_sec:random_pde_model}, with $P=300$ (top) and $P=1000$ (bottom). Right: The computed solutions (y-axis) for each of the $100$ runs plotted against the inverse value of the given MSE tolerances (x-axis) given in Equation \eqref{BeBa_eq:mse_tolerances}. }
 \label{BeBa_fig:convergence_and_solution_spread_cross}
\end{figure}
\begin{remark}
\label{BeBa_rmk:independence}
 To obtain independent runs, different random seeds are used for each run. The exponentially distributed samples have their own seeding independent of the PDE coefficient samples. In standard CLMC the exponential numbers are drawn with the numpy library \cite{BeBa_Numpy2020} and in QCLMC the quasi-random numbers are generated by Sobol numbers \cite{BeBa_Sobol1967_Sobol} with Owen scrambling \cite{BeBa_Owen1995_Scrambling, BeBa_Owen1998_Scrambling} through the scipy library \cite{BeBa_Scipy2020}.
\end{remark}
\textbf{Acknowledgement:}
We thank the anonymous referees for constructive comments leading to a significant improvement of the manuscript and Robin Merkle for helpful discussions fostering ideas in particular on numerical aspects.
The work of Cedric Aaron Beschle is funded by the Deutsche Forschungsgemeinschaft (DFG, German Research Foundation) -- Project-ID 251654672 -- SFBTRR 161.

\end{document}